\title{Some considerations on amoeba forcing notions}
\author{Giorgio Laguzzi}
\newtheorem{definition2}{Definition}
\newtheorem{lemma}[definition2]{Lemma}
\newtheorem{claim}[definition2]{Claim}
\newtheorem{remark2}[definition2]{Remark}
\newtheorem*{Mobservation2}{Main Observation}
\newtheorem{example2}[definition2]{Example}
\newtheorem{corollary}[definition2]{Corollary}
\newenvironment{definition}{\begin{definition2} \upshape}{\end{definition2}}
\newenvironment{remark}{\begin{remark2} \upshape}{\end{remark2}}
\newcommand{\asacks}{\poset{AS}}
\newcommand{\asilver}{\poset{AV}}
\newcommand{\baire}{\omega^\omega}
\newcommand{\cantor}{2^\omega}
\newcommand{\cantorfin}{2^{< \omega}}
\newcommand{\cohen}{\poset{C}}
\newcommand{\conc}{\smallfrown}
\newcommand{\DDelta}{\mathbf{\Delta}}
\newcommand{\enfa}{\textit}
\newcommand{\ifif}{\Leftrightarrow}
\newcommand{\force}{\Vdash}
\newcommand{\good}{\overset{g}{\ll}}
\newcommand{\height}{\text{ht}}
\newcommand{\levnumber}{\texttt{SL}}
\newcommand{\model}{\textsc}
\newcommand{\poset}{\mathbf}
\newcommand{\random}{\poset{B}}
\newcommand{\restric}{{\upharpoonright}}
\newcommand{\sacks}{\poset{S}}
\newcommand{\silver}{\poset{V}}
\newcommand{\SSigma}{\mathbf{\Sigma}}
\newcommand{\splitting}{\textsc{Split}}
\newcommand{\splitlevel}{\textsf{ns}}
\newcommand{\stem}{\textsc{Stem}}
\newcommand{\term}{\textsc{Term}}
\begin{document}
\maketitle

\begin{abstract}
In this paper we analyse some notions of amoeba for tree forcings. In particular we introduce an amoeba-Silver and prove that it satisfies quasi pure decision but not pure decision. Further we define an amoeba-Sacks and prove that it satisfies the Laver property. We also show some application to regularity properties. We finally present a generalized version of amoeba and discuss some interesting associated questions.
\end{abstract}

\textbf{Acknowledgement} For the first part of the present paper, the author wishes to thank Sy Friedman and the FWF for the indispensable support through the research project \#P22430-N13.
\section{Introduction}
The amoeba forcings play an important role when dealing with questions concerning the real line, such as cardinal invariants and regularity properties. As an intriguing example, one may consider the difference between the amoeba for measure and category in Shelah's proof regarding the use of the inaccessible cardinal to build models for regularity properties, presented in \cite{Sh84} and \cite{Sh85}; in fact, since the amoeba for category is \emph{sweet} (a strengthening of $\sigma$-centeredness), one can construct, via amalgamation, a Boolean algebra as limit of length $\omega_1$ (without any need of the inaccessible), in order to get an extension where all projective sets have the Baire property. On the contrary, for Lebesgue measurability, Shelah proved that if we assume all $\SSigma^1_3$ sets to be Lebesgue measurable, we obtain, for all $x \in \baire$, $\model{L}[x] \models \text{``$\omega_1^\model{V}$ is inaccessible''}$. If one then goes deeply into Shelah's construction of the model satisfying projective Baire property just mentioned, one can realize that the unique difference with Lebesgue measurability consists of the associated amoeba forcing, which is not sweet for measure. Such an example is probably one of the oldest and most significant ones to underline the importance of the amoeba forcing notions in set theory of the real line. In other cases, it is interesting to define amoeba forcings satisfying certain particular features, like not adding specific types of generic reals, not collapsing $\omega_1$ and so on; these kinds of constructions are particularly important when one tries to separate regularity properties of projective sets, or when one tries to blow up certain cardinal invariants without affecting other ones. For a general and detailed approach to regularity properties, one may see \cite{K12}. The main aim of the present paper is precisely to study two versions of amoeba, for Sacks and Silver forcing, respectively. 
\begin{definition} \label{def:amoeba}
Let $\poset{P}$ be either Sacks or Silver forcing. We say that $\poset{AP}$ is an amoeba-$\poset{P}$ iff for any ZFC-model $\model{M} \supseteq \model{N}^{\poset{AP}}$, we have
\[
\model{M} \models \forall T \in \poset{P} \cap \model{N} \quad \exists T' \in \model{M} \cap \poset{P} \quad  (T' \subseteq T \land \forall x \in [T'](\text{$x$ is $\poset{P}$-generic over $\model{N}$})).
\]
\end{definition}

Note that this definition works even when $\poset{P}$ is any other tree forcing
notions (Laver, Miller, Mathias, and so on). We would like to mention that a similar work for Laver and Miller forcing is developed in detail by Spinas in \cite{S95} and \cite{S09}.

Let us now recall some basic notions and standard notation. Given $t,t' \in \cantorfin$, we write $t' \unlhd t$ iff $t'$ is an initial segment of $t$. A \emph{tree} $T$ is a subset of $\cantorfin$ closed under initial segments, i.e., for every $t \in T$, for every $k < |t|$, $t \restric k \in T$,
 where $|t|$ represents the length of $t$.
Given $s,t \in T$, we say that $s$ and $t$ are \emph{incompatible} (and we write $s \nparallel t$) iff neither $s \unlhd t$ nor $t \unlhd s$; otherwise one says that $s$ and $t$ are compatible ($s \parallel t$).
We denote with $\stem(T)$ the longest element $t \in T$ compatible with every node of $T$.
For every $t \in T$, we say that $t$ is a \emph{splitting node} whenever both $t^\conc 0 \in T$ and $t^\conc 1 \in T$, and we denote with $\splitting(T)$ the set of all splitting nodes.
Moreover, for $n \geq 1$, we say $t \in T$ is an \emph{$n$th splitting node} iff   $t \in \splitting(T)$ and there exists $n \in \omega$ maximal such that there are natural numbers $k_0 < \dots < k_{n-1}$ with
$t \restric k_j \in \splitting(T)$,
for every $j \leq n-1$. We denote with $\splitting_n(T)$ the set consisting of the $n$th splitting nodes. For a finite tree $T$, the \emph{height} of $T$ is defined by $\height(T):= \max\{n: \exists t \in T, |t|=n \}$, while $\term(T)$ denotes the set of terminal nodes of $T$, i.e, those nodes having no proper extensions in $T$.  
Finally, for every $t \in T$, the set $\{s \in T: s \parallel t \}$ is denoted by $T_t$, the \emph{body of $T$} is defined as $[T]:= \{ x \in \cantor: \forall n \in \omega (x \restric n \in T) \}$, and $T | n := \{ t \in T: |t| \leq n \}$.

Further, given a tree $T$ and a finite subtree $p \subset T$, we define:
\begin{itemize}
\item $T {\downarrow} p := \{ t \in T: \exists s \in \term(p) (s \parallel t)  \}$;
\item $p \sqsubseteq T \Leftrightarrow \forall t \in T \setminus p \quad \exists s \in \term(p) (s \unlhd t)$, 
and we will say that $p$ \enfa{is an initial segment of $T$}, or equivalently $T$ \enfa{end-extends} $p$.
\end{itemize}

Our attention is particularly focused on the following two types of infinite trees of $\cantorfin$:
\begin{itemize}
\item $T \subseteq \cantorfin$ is  a \enfa{perfect} (or \emph{Sacks}) tree iff each node can be extended to a splitting node.
\item $T \subseteq \cantorfin$ is a \enfa{Silver tree} (or \enfa{uniform tree}) iff $T$ is perfect and for every $s,t \in T$, such that $|s|=|t|$, one has
$s^\conc 0 \in T \ifif t^\conc 0 \in T$ and $s^\conc 1 \in T \ifif t^\conc 1 \in T$.
\end{itemize}
Sacks forcing $\sacks$ is defined as the poset consisting of Sacks trees, ordered by inclusion, and Silver forcing $\silver$ is analogously defined by using Silver trees. Further, if $G$ is the $\sacks$-generic filter over $\model{N}$, we call the generic
branch $z_G= \bigcup\{ \stem(T): T \in G \}$ a Sacks
real (and analogously for Silver).
Other common posets that will appear in the paper will be the Cohen forcing $\cohen$, consisting of finite sequences of $0$'s and $1$'s, ordered by extension,
and the random forcing $\random$, consisting of perfect trees $T$ with strictly positive measure, ordered by inclusion.

We recall the notion of \emph{axiom A}, which is a strengthening of properness.
\begin{definition} \label{def:axiomA}
A forcing $P$ satisfies \enfa{Axiom A}  \index{axiom A} if and only if there exists a sequence $\{ \leq_n: n \in \omega \}$ of orders of $P$ such that:
\begin{enumerate}
\item for every $a,b \in P$, for every $n \in \omega$, $b \leq_{n+1} a$ implies both $b \leq_{n} a$ and $b \leq a$;
\item for every sequence $\langle a_n : n \in \omega \rangle$ of conditions in $P$ such that  for every $n \in \omega, a_{n+1} \leq_n a_n$, there exists $b \in P$ such that for every $n \in \omega$, $b \leq_n a_n$;
\item for every maximal antichain $A \subseteq P$, $b \in P$, $n \in \omega$, there exists $b' \leq_n b$ such that $\{ a \in A: a \text{ is compatible with } b' \}$ is countable.
\end{enumerate}
\end{definition}

\textbf{Notational convention.} In the literature, the Silver forcing is usually denoted by $\silver$, and we keep such a convention. As a consequence, to avoid possible confusion, the ground model will be denoted by the letter \model{N}, instead of the more common $\model{V}$. 

The paper is organized as follows: in section \ref{section:silver},  we show that the natural amoeba-Silver satisfies axiom A, and so in particular does not collapse $\omega_1$. In section \ref{section:sacks}, we introduce a version of amoeba-Sacks and prove that it satisfies the Laver property. We remark that our construction is very different from the one presented by Louveau, Shelah and Velickovic in \cite{LSV}, and in particular we do not use any strong partition theorem (like Halpern-La\"uchli theorem). Finally, a last section is devoted to discuss some difficulties when trying to kill Cohen reals added by the amoeba-Silver, and we discuss a generalized notion of amoeba together with some possible further developments concerning regularity properties. At the end of section \ref{section:sacks}, we also present an application of amoeba-Sacks, to separate Sacks measurability from Baire property at some projective level.

\

\textbf{Acknowledgement.}
I would like to thank Yurii Khomskii and Wolfgang Wohofsky for some stimulating and helpful discussions concerning amoeba for tree-forcings. Further, I thank Martin Goldstern to suggest me Spinas' papers \cite{S95} and \cite{S09}, which have been crucial for a deep understanding of this topic, as well as some enlightening insights.

\section{Amoeba-Silver} \label{section:silver}

In this section we discuss some properties of the amoeba-Silver $\asilver$ defined by: 
\[
(p,T) \in \asilver \text{ iff } T \text{ is a Silver tree and } \exists n \in \omega \text{ such that } p= T | n,
\]
ordered by $(p',T') \leq (p,T) \Leftrightarrow T'\subseteq T \land p' \restric \height(p)=p$. 

For a proof that this is a well-defined notion of amoeba for Silver forcing, i.e, it satisfies Definition \ref{def:amoeba}, one can see \cite[Lemma 18, Corollary 20]{L14}.

In order to show that $\asilver$ satisfies axiom A, we define the sequence of orders on $\asilver$ as follows:
\begin{eqnarray*}
(p',T') \leq_n (p,T) &\ifif& (p',T') \leq (p,T) \\
						 &&   p'=p \wedge \forall k \leq n (\splitting_{k}(T')= \splitting_{k}(T)).
\end{eqnarray*}

Clearly, conditions 1 and 2 of Definition \ref{def:axiomA} are satisfied. To obtain condition 3, we first check Lemma \ref{fact:VT-axiomA}, i.e., $\asilver$ satisfies quasi pure decision.

\begin{definition}
Given $D \subseteq \asilver$ open dense, $(p,T) \in \asilver$ and $q$ finite subtree of $T$, we say that $q$ is \enfa{deciding} iff there exists $S \subseteq T$ such that $(q,S) \in D$.

\end{definition}
\begin{lemma} \label{fact:VT-axiomA}
Let $D \subseteq \asilver$ be open dense, $(p,T) \in \asilver$ and $m \in \omega$. Then there exists $T^* \subseteq T$ such that $(p,T^*) \leq_m (p,T)$ and 
\[
 q \text{ is deciding } \Rightarrow  (q, T^* {\downarrow} q) \in D.
\]
\end{lemma}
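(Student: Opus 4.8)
The plan is to prove the statement by a fusion argument along the orders $\leq_n$, preserving the first $m$ splitting levels and then, level by level, steering the relevant finite approximations into $D$.

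First I would set up the fusion skeleton. Starting from $(p,T)$, I build a sequence $(p,T)=(p,T_m)\geq_m(p,T_{m+1})\geq_{m+1}(p,T_{m+2})\geq_{m+2}\cdots$ with $(p,T_{i+1})\leq_i(p,T_i)$ for each $i\geq m$, and set $T^*:=\bigcap_{i\geq m}T_i$. By clauses~1 and~2 of Definition~\ref{def:axiomA}, which are already available for these orders, the limit satisfies $(p,T^*)\leq_m(p,T)$; in particular $\splitting_k(T^*)=\splitting_k(T)$ for every $k\leq m$, so the only freedom exercised at stage $i$ is to prune strictly above the $i$-th splitting level, exactly as $\leq_i$ permits.

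The heart of the construction is the single fusion step at stage $i$. Because $T_i$ is uniform, for each height $l$ there are only finitely many finite uniform subtrees $q$ of $T_i$ with $\height(q)=l$ end-extending $p$ (they are obtained by freezing a subset of the splitting coordinates of $T_i$ below $l$). Processing these finitely many approximations in turn, whenever $q$ is deciding I choose a witness $S\subseteq T_i$ with $(q,S)\in D$ and then refine $T_i$ uniformly above $\height(q)$ so that the aligned slice of the refined tree through $\term(q)$ is contained in $S$; since the refinement only prunes above $\term(q)$ and slicing is monotone, the decisions already secured for the earlier approximations are preserved. Once the slice lies inside $S$, the resulting condition is below $(q,S)\in D$ and hence lies in $D$ by openness; passing to the limit, $T^*{\downarrow}q\subseteq T_{i+1}{\downarrow}q\subseteq S$ gives $(q,T^*{\downarrow}q)\in D$, again by openness (read for the $q$ that survive as finite approximations of $T^*$, so that $(q,T^*{\downarrow}q)$ is a genuine condition). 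As every deciding $q$ has some finite height, it is handled at the corresponding stage, which yields the lemma.

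The main obstacle, and the precise reason the conclusion is only the \emph{quasi} form of pure decision, is the uniformity of Silver trees: steering the slice through one terminal node into a witness forces the identical pruning above every terminal node at that level, so the refinements demanded by the different deciding approximations at a single level must be made simultaneously compatible inside one uniform tree. I expect the delicate part of the write-up to be twofold. First, one must verify that the finitely many witnesses chosen at a given level admit a common uniform refinement — that is, that driving one slice into $D$ does not irreparably destroy the decidability of another — so that the step terminates with a genuine Silver tree $T_{i+1}$. Second, one must check that the witnesses can always be taken inside the current tree $T_i$, rather than merely inside the original $T$, so that the commitments made at earlier stages never clash with those made later. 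It is exactly this inability to retain the whole tree — as opposed to its aligned slice $T^*{\downarrow}q$ — that separates quasi pure decision from full pure decision.
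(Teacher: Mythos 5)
Your architecture is the same as the paper's: a fusion along the orders $\leq_n$ in which, level by level, one runs through the finitely many uniform finite subtrees end-extending $p$ (finiteness being exactly what uniformity buys), sequentially shrinks into a witness whenever one is available, regrafts uniformly so that earlier commitments survive, and concludes at the limit by openness of $D$. The paper implements your ``refine $T_i$ uniformly above $\height(q)$ so that the aligned slice through $\term(q)$ lands in $S$'' by an explicit translation operation $T\otimes q$ (graft the uniform tail structure of the current tree onto the terminal nodes of $q$), and your observation that sequential processing plus monotonicity of the slices preserves earlier decisions is precisely how the paper avoids any simultaneous-compatibility argument; so your first ``delicate point'' is not actually an obstacle.

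Your second ``delicate point'', however, is aimed in the wrong direction, and as stated it cannot be checked: it is simply false that a $q$ which is deciding in the sense of the definition (witness $S\subseteq T$) must admit a witness inside the current tree $T_i$ --- earlier stages may well have pruned away every such $S$. The way out (and what the paper actually does) is not to verify this but to weaken both the commitment and the conclusion: at each substep one asks whether a witness exists inside the current tree translated over the candidate $q$; if yes, one shrinks to it, and if no, one skips $q$ with no harm done. What this yields is the ``quasi'' form with which the paper's proof ends, namely: for every $(q,S)\leq (p,T^*)$ with $(q,S)\in D$ one has $(q,T^*{\downarrow}q)\in D$ --- that is, the implication is established for those $q$ whose deciding witness lies inside $T^*$ (such a witness is automatically inside the current tree at $q$'s stage, so the commitment was indeed made there). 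This restricted form is exactly what the maximal-antichain argument of Lemma \ref{lemma:silver-3cond-axA} consumes, so nothing is lost; but your closing claim that ``every deciding $q$ is handled at the corresponding stage'' must be replaced by this weaker statement, since otherwise the write-up stalls at an unprovable verification.
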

\begin{proof}
For every tree $T$, let $\levnumber_T(n):= |t|$, where $t \in T$ is an $n$th splitting node.
Let $D \subseteq \asilver$ be open dense and fix $(p,T) \in \asilver$ arbitrarily. Let $p^0= T | (\texttt{SL}_T(h_0)+1)$,
where the $h_0$-th splitting nodes are the first splitting nodes occurring above $p$, i.e., if $t \in \splitting_{h_0}(T)$, then $|t| > \height(p)$ and there are no splitting nodes $t'$ such that $\height(p) < |t'| < |t|$. We assume $m=h_0$ and leave the general case to the reader.

We use the following notation: given $T$ infinite tree and $p$ finite tree, put
\begin{multline*}
T \otimes p  := \{ t \in \cantorfin: \exists t' \in T \;\exists t'' \in \term(p)\text{ s.t. }\forall n <  |t''| (t(n)=t''(n)) \\ 
\wedge\; \forall n \geq |t''| (t(n)=t'(n))  \}. \index{$\otimes$}
\end{multline*}

(Intuitively, $T \otimes p$ is the translation of $T$ over $p$).

Let $\{ p^0_j: j < 3  \}$ enumerate the uniform finite trees  such that $p \subseteq p^0_j \subseteq p^0$, $\height(p^0_j)=\height(p^0)$
and $p^0_j {\restric} \height(p)=p$.
Starting from such $p^0$, one develops the following construction for $i \geq h_0$ and $j < 3^{i-h_0+1}$.
\begin{itemize}
\item Start from $i=h_0$:
\begin{itemize}
\item \textsc{Substep} $j=0$: if there exists $S \subseteq T$ such that $(p^0_0,S) \in D$, then put $T^0_0= S$; otherwise put $T^0_0 = T$;
\item \textsc{Substep }$j+1$: if there exists $S\subseteq T^0_j \otimes p^0_{j+1}$ such that  $(p^0_{j+1},S) \in D$,
then put $T^0_{j+1}=S$; otherwise let $T^0_{j+1}= T^0_j$;
\item when the operation is done for every $j < 3$, put $T^1_* = T^0_{2} \otimes p^0$ and
$p^1= T^1_* | (\texttt{SL}_{T^1_*}(h_0+1)+1)$; furthermore, let $\{ p^1_j: j < 3^2 \}$ be the enumeration
of all the uniform finite trees such that  $p^1_j \subseteq p^1 $, $\height(p^1_j)=\height(p^1)$ and $p^1_j {\restric} \height(p)=p$;
\end{itemize}

\item \textsc{Step }$i=h_0+k$:
\begin{itemize}
 \item \textsc{Substep} $j=0$: if there exists $S \subseteq T^{k}_*$ such that  $(p^{k}_0,S) \in D$, then put $T^{k}_0=S$; otherwise let $T^{k}_0= T^{k}_*$;
 \item \textsc{Substep }$j+1$: if there exists $S\subseteq T^{k}_j \otimes p^{k}_{j+1}$ such that  $(p^{k}_{j+1},S) \in D$,
then put $T^{k}_{j+1}=S$; otherwise let $T^{k}_{j+1}= T^{k}_j$;
 \item when the operation is done for every $j < 3^{k+1}$, put $T^{k+1}_* = T^{k}_{3^{k+1}-1} \otimes p^{k}$
and $p^{k+1}= T^{k+1}_* | (\texttt{SL}_{T^{k+1}_*}(i+1)+1)$; furthermore, let $\{ p^{k+1}_j: j < 3^{k+2} \}$
be the enumeration of all the uniform finite trees such that  $p^{k+1}_j \subseteq p^{k+1} $, $\height(p^{k+1}_j)=\height(p^{k+1})$ and
$p^{k+1}_j {\restric} \height(p)=p$.
\end{itemize}
\end{itemize}
Once such a construction is finished, one obtains a sequence $\langle T^k_*: k \in \omega \rangle $ such that
$T^{k+1}_* \leq_{h_0 + k} T^{k}_*$ (where $T^0_*=T$). Hence, the tree $T^*$ obtained by fusion, i.e.,
$T^*= \bigcap_{k \in \omega} T^k_*$, is a Silver tree, and so
the pair $(p,T^*)$ belongs to $\asilver$ and $(p,T^*) \leq_{h_0} (p,T)$.

By construction, one gets
\[ \label{equation:VT-axiomA}
\forall (q,S) \leq (p,T^*), \text{ if } (q,S) \in D \text{ then } (q, T^* {\downarrow} q) \in D,
\]
which completes the proof.
\end{proof}

Lemma \ref{fact:VT-axiomA} is the core of the next key result.
\begin{lemma} \label{lemma:silver-3cond-axA}
Let $A \subseteq P$ be a maximal antichain, $(p,T) \in \asilver$ and $m \in \omega$. Then there exists $T^* \subseteq T$ such that $(p,T^*) \leq_m (p,T)$ and 
$(p,T^*)$ has only countably many compatible elements in $A$.
\end{lemma}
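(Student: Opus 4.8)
The plan is to reduce condition~3 of Axiom~A to the quasi pure decision property already isolated in Lemma~\ref{fact:VT-axiomA}. First I would attach to the maximal antichain $A$ the set
\[
D := \{ (q,S) \in \asilver : \exists a \in A \; (q,S) \leq a \},
\]
and verify that it is open dense. It is open because $\leq$ is transitive, so any extension of a condition lying below some $a \in A$ is again below $a$; it is dense because $A$ is \emph{maximal}, so any $(q,S)$ is compatible with some $a \in A$, and a common extension of $(q,S)$ and $a$ lies below $(q,S)$ and in $D$. Applying Lemma~\ref{fact:VT-axiomA} to $D$, $(p,T)$ and $m$ then produces a Silver tree $T^* \subseteq T$ with $(p,T^*) \leq_m (p,T)$ such that every deciding finite subtree $q$ satisfies $(q, T^* {\downarrow} q) \in D$.

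Next I would set up the counting. Let $\mathcal{F}$ be the collection of all finite subtrees $r$ of $T^*$ for which $(r, T^* {\downarrow} r)$ is a condition of $\asilver$ extending $(p,T^*)$; since $T^*$ has only countably many nodes, $\mathcal{F}$ is countable. For each $r \in \mathcal{F}$ with $(r, T^* {\downarrow} r) \in D$, fix the \emph{unique} $a_r \in A$ with $(r, T^* {\downarrow} r) \leq a_r$ — uniqueness holds because a condition below two members of the antichain $A$ would make them compatible. Put $A' := \{ a_r : r \in \mathcal{F},\ (r, T^* {\downarrow} r) \in D \}$, which is countable. The whole lemma then reduces to the claim that every $a \in A$ compatible with $(p,T^*)$ already belongs to $A'$.

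To prove the claim, suppose $a \in A$ is compatible with $(p,T^*)$, witnessed by a common extension $(r,R) \leq (p,T^*), a$. From $(r,R) \leq a \in A$ we get $(r,R) \in D$, so $r$ (a finite subtree of $R \subseteq T^* \subseteq T$) is deciding, and Lemma~\ref{fact:VT-axiomA} yields $(r, T^* {\downarrow} r) \in D$, hence $r \in \mathcal{F}$ and $a_r$ is defined. The step I expect to demand the most care is checking that $(r,R) \leq (r, T^* {\downarrow} r)$: here $r = R | \height(r)$ is a uniform finite tree, one verifies $(T^* {\downarrow} r) | \height(r) = r$ so that $(r, T^* {\downarrow} r)$ is a genuine condition, and one shows $R \subseteq T^* {\downarrow} r$, which holds because every node of the Silver tree $R \subseteq T^*$ is compatible with some terminal node of $r = R | \height(r)$. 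Granting this, $(r,R)$ lies below both $a$ and $a_r$, so the antichain property forces $a = a_r \in A'$. Thus the set of elements of $A$ compatible with $(p,T^*)$ is contained in the countable set $A'$, completing the argument.
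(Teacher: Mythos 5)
Your proposal is correct and takes essentially the same approach as the paper: both apply Lemma \ref{fact:VT-axiomA} to the open dense set $D_A$ associated with $A$, then combine the countability of the finite subtrees $q$ of $T^*$ with the antichain property of $A$ to bound the number of compatible elements. The only difference is organizational — the paper argues by contradiction (an $\omega_1$-sequence of compatible elements of $A$ would produce two distinct, hence incompatible, members of $A$ above the same condition $(q, T^* {\downarrow} q)$), whereas you build the countable set $A'$ directly; your explicit check that $(r,R) \leq (r, T^* {\downarrow} r)$, so that the antichain property pins down $a = a_r$, makes precise a step the paper leaves implicit.
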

\begin{proof}
Fix a condition $(p,T) \in \asilver$.
Let $D_A$ be the open dense subset associated with $A$, i.e.,
$D_A = \{ (q,S) \in \asilver: \exists (q',S') \in A ((q,S)\leq (q',S'))  \}.$
Let $T^*$ be as in Lemma \ref{fact:VT-axiomA}.
To reach a contradiction, assume there are uncountably many elements in $A$ compatible with $(p,T^*)$, i.e., there is a sequence
$\langle (p_\alpha, T_\alpha) : \alpha < \omega_1 \rangle$ of distinct elements of $A$ and there are $(q_\alpha, S_\alpha)$'s such that,
for every $\alpha < \omega_1$,
\[
(q_\alpha,S_\alpha) \leq (p_\alpha,T_\alpha), (p,T^*).
\]
Note that $(q_\alpha,S_\alpha) \in D_A$. Thus, by Lemma \ref{fact:VT-axiomA}, one obtains $(q_\alpha, T^* {\downarrow} q_\alpha) \in D_A$,
and therefore
\[
(q_\alpha,T^* {\downarrow} q_\alpha)  \leq (p_\alpha,T_\alpha), (p,T^*).
\]
Note that there are only countably many different $q_\alpha$'s and therefore there exist $\alpha_0,\alpha_1 < \omega_1$
such that $(q_{\alpha_0},T^* {\downarrow} q_{\alpha_0})=(q_{\alpha_1},T^* {\downarrow} q_{\alpha_1})$, and this contradicts
$(p_{\alpha_0},T_{\alpha_0}) \perp (p_{\alpha_1},T_{\alpha_1})$.
\end{proof}

\begin{corollary}
$\asilver$ satisfies axiom A. 
\end{corollary}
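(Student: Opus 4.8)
The plan is to verify the three conditions of Axiom A (Definition~\ref{def:axiomA}) for the sequence of orders $\{\leq_n : n \in \omega\}$ defined above. The crucial ingredient, condition~3, has in fact already been isolated as Lemma~\ref{lemma:silver-3cond-axA}, so the bulk of the argument reduces to checking the two more routine conditions and then quoting that lemma.

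First I would dispatch condition~1. If $(p',T') \leq_{n+1} (p,T)$, then by definition $(p',T') \leq (p,T)$, $p'=p$, and $\splitting_k(T')=\splitting_k(T)$ for all $k \leq n+1$; in particular the latter holds for all $k \leq n$, so $(p',T') \leq_n (p,T)$, while $(p',T') \leq (p,T)$ is explicitly part of the definition. Thus condition~1 is immediate from the shape of the orders.

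Next I would handle condition~2 (fusion). Given a sequence $\langle (p_n,T_n) : n \in \omega \rangle$ with $(p_{n+1},T_{n+1}) \leq_n (p_n,T_n)$, the requirement $p_{n+1}=p_n$ forces all the $p_n$ to equal a single finite tree $p$, while the clause $\splitting_k(T_{n+1})=\splitting_k(T_n)$ for $k \leq n$ means that, once a splitting level is reached, it is never altered at later stages. Hence $\splitting_m(T_j)=\splitting_m(T_m)$ for every $j \geq m$. Setting $T^* := \bigcap_{n} T_n$, one checks that $\splitting_m(T^*)=\splitting_m(T_m)$ for every $m$, from which it follows that $T^*$ is a Silver tree and that $(p,T^*) \leq_n (p,T_n)$ for all $n$. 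This is exactly the kind of fusion already performed at the end of the proof of Lemma~\ref{fact:VT-axiomA}.

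Finally, condition~3 is precisely the content of Lemma~\ref{lemma:silver-3cond-axA}: given a maximal antichain $A$, a condition $(p,T)$ and $m \in \omega$, the lemma produces $(p,T^*) \leq_m (p,T)$ with only countably many elements of $A$ compatible with it, which is what condition~3 demands (taking the witness $b'=(p,T^*)$). I expect no genuine obstacle, since conditions~1 and~2 are pure bookkeeping about the definition of $\leq_n$ and the stabilization of splitting levels, and the one substantive step — the quasi pure decision argument underlying countable compatibility — has already been carried out in Lemmas~\ref{fact:VT-axiomA} and~\ref{lemma:silver-3cond-axA}. The only point worth a moment's care is verifying in the fusion step that the intersection $T^*$ really is a Silver tree rather than merely a perfect tree, which follows from the uniformity of the trees being preserved level by level along the sequence.
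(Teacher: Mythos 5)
Your proposal is correct and follows essentially the same route as the paper: the paper's proof also dismisses conditions~1 and~2 as straightforward (you merely spell out the bookkeeping and the fusion argument, which the paper has already exhibited at the end of the proof of Lemma~\ref{fact:VT-axiomA}) and obtains condition~3 directly from Lemma~\ref{lemma:silver-3cond-axA}. No gaps.
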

\begin{proof}
Conditions 1 and 2 of Definition \ref{def:axiomA} are straightforward, while condition 3 follows from Lemma \ref{lemma:silver-3cond-axA}.
\end{proof}

\begin{remark}
 Consider the following notation:
\begin{itemize}
\item for every $p \subset \cantorfin$ finite and uniform, let $\splitlevel(p)=$ number of splitting levels of $p$;
\item let $\Delta_p= \langle \levnumber_p(0), \levnumber_p(1), \dots, \levnumber_p(\splitlevel(p)-1) \rangle$.
\end{itemize}
Finally, if $G$ is $\asilver$-generic over $\model{N}$, let $h= \bigcup \{ \Delta_p: (p,T) \in G \}$.

\textsc{Claim}: $\force_{\asilver} \text{`` }\dot{h} \text{ is dominating over $\model{N}$"}.$

\begin{proof} Fix an increasing $x \in \baire \cap \model{N}$ and $(p,T) \in \asilver$, arbitrarily.
Pick $T' \subseteq T$, $T' \restric \height(p)= p$ such that for every
$n \geq \splitlevel(p)$, $\levnumber_{T'}(n) > x(n) $. It is clear that $(p,T') \force \forall n \geq \splitlevel(p)(\dot h(n) > x(n))$.
\end{proof}
\end{remark}

Amoeba-Silver does not have pure decision, as pointed out by the following observation.
\begin{remark} \label{remark:silver-cohen}
Let $T_G$ be the generic tree added by $\asilver$ and define the following $c \in \cantor$: for every $n \in \omega$,
\[
c(n)=\begin{cases} 0 & \mbox{if } \{ j \in \omega: \levnumber_{T_G}(n+1) + 2 < j \leq \levnumber_{T_G}(n+2)+1 \land T_G(j)=0 \}
\\ & \mbox{ is even;} \\ 1 & \mbox{otherwise. }
\end{cases}
\]
(Intuitively, $c(n)$ represents the parity of $0$s between the $n+1$st and $n+2$nd splitting level.)

\textsc{Claim}: $\force_{\asilver} \text{`` }\dot{c} \text{ is Cohen over $\model{N}$"}.$

\begin{proof} Fix a closed nowhere dense set $F$ of the ground model. Given $(p,T) \in \asilver$, let $c_0$ be the
part of $c$ already decided by such a condition. Denote with $s$ the sequence in $\cantorfin$ such that $[{c_0}^\conc s] \cap F = \emptyset$. Now, it is clear that one can remove some splitting nodes and choose 0 if needed, according to what $s$ tells us; more precisely, for every $n$, $|c_0|\leq n < |{c_0}^\conc s|$, if ${c_0}^\conc s(n)=0$ and there is an even number of $0$s between the $n+1$st and the $n+2$nd splitting level then we do nothing, otherwise, we remove the $n+2$nd splitting level, and we freely choose $0$ or $1$ in order to have an even number of $0$s between the $n+1$st and the ``new'' $n+2$nd splitting level. An analogous argument works when ${c_0}^\conc s(n)=1$.
\end{proof}
\end{remark}

\section{Amoeba-Sacks} \label{section:sacks}

The standard amoeba-Sacks consists of the set of pairs $(p,T)$, where $T$ is a perfect tree and $p=T {\mid} n$, for some $n \in \omega$, ordered by $(p',T') \leq (p,T)$ iff $T' \subseteq T$ and $p'$ end-extends $p$. However such a forcing has the bad feature of adding a Cohen real: let $T_G= \bigcup \{ p: \exists T ((p,T) \in G) \}$, where $G$ is the generic over $\model{N}$, we define, for every $n \in \omega$,
\begin{itemize}
\item[$c(n)= 0$] iff the shortest $n+2$nd splitnode above the leftmost $n+1$st 
 splitnode $t \in T_G$ extends $t^\conc 0$, or if the two $n+2$nd splitting  nodes extending $t$ have the same length; 
\item[$c(n)=1$] otherwise.
\end{itemize}

\begin{claim}
 $c$ is Cohen over the ground model $\model{N}$. 
\end{claim}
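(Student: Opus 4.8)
The plan is to show that $c$ is a Cohen real over $\model{N}$ by the standard density argument: I will fix an arbitrary closed nowhere dense set $F \in \model{N}$ (equivalently, a basic open condition to avoid) together with an arbitrary condition $(p,T) \in \asacks$, and produce $(p',T') \leq (p,T)$ forcing $\dot c$ to escape $F$. Since the Cohen reals are exactly those meeting every dense open set coded in $\model{N}$, genericity of $c$ amounts to showing that for each such $F$ the set of conditions forcing $\dot c \notin F$ is dense below every condition.

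First I would isolate the finite initial segment $c_0 \in \cantorfin$ of $\dot c$ already decided by $(p,T)$; this is determined by the splitting structure of the finite stem $p = T{\mid}n$, namely by the relative lengths and left/right positions of the first several splitting nodes recorded in $p$. Because $F$ is nowhere dense, there is $s \in \cantorfin$ with $[c_0{}^\conc s] \cap F = \emptyset$, and the goal is to refine $(p,T)$ to a condition $(p',T')$ whose decided part of $\dot c$ begins with $c_0{}^\conc s$. The key observation is that each bit $c(n)$ is governed purely by a local comparison of two $n+2$nd splitnodes lying above a fixed $n+1$st splitnode, so by pruning $T$ appropriately above $p$ one can force any prescribed value of $c(n)$ for the relevant $n$ in the range $|c_0| \le n < |c_0{}^\conc s|$: to force $c(n)=0$ one arranges the leftmost $n+2$nd splitnode above $t$ to extend $t^\conc 0$ (or to make the two such splitnodes have equal length), and to force $c(n)=1$ one arranges the opposite, in each case simply by deleting the unwanted branches at the appropriate level. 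Collecting these finitely many local modifications yields a perfect tree $T' \subseteq T$ with a long enough stem $p'$ that decides $\dot c$ up to $c_0{}^\conc s$, whence $(p',T') \force \dot c{\restric}|c_0{}^\conc s| = c_0{}^\conc s$ and therefore $(p',T') \force \dot c \notin F$.

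The main obstacle I anticipate is bookkeeping rather than conceptual: one must check that the local edits used to set the successive bits $c(n)$ do not interfere with one another, i.e.\ that choosing the left/right pattern or equalizing the lengths of the $n+2$nd splitnodes does not retroactively disturb the already-fixed values $c(m)$ for $m < n$, nor the splitting nodes recorded in the stem $p$. This is handled by performing the modifications in increasing order of $n$ and keeping the $n+1$st splitnode $t$ (which indexes $c(n)$) fixed while only pruning strictly above the $n+2$nd level, so that lower bits, which depend on earlier splitting levels, remain untouched. Once one verifies that the resulting tree is still perfect and end-extends $p$ in the sense required by the ordering of $\asacks$, the density argument closes and $\dot c$ is Cohen over $\model{N}$.
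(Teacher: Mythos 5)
Your argument is correct and is essentially the paper's own proof: fix a closed nowhere dense $F \in \model{N}$ and a condition $(p,T)$, read off the decided part $c_0$ of $\dot c$, choose $s$ with $[c_0{}^\conc s] \cap F = \emptyset$, and prune $T$ above the leftmost branch so that the successive splitnode configurations follow $s$, then take a long enough stem $p'$ so that $(p',T') \force c_0{}^\conc s \lhd \dot c$. The only slip is terminological: to set $c(n)$ you must control which of the two $n+2$nd splitnodes above $t$ is \emph{shortest} (you wrote ``leftmost''), which is exactly what your pruning accomplishes, so the argument stands as the paper's does.
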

\begin{proof} Let $B \in \model{N}$ be closed nowhere dense and $(p,T)$ an amoeba condition. We aim at finding a condition $(p',T') \leq (p,T)$ such that $(p',T') \force \dot{c} \notin B$. Let $t_0 \in \cantorfin$ such that $(p,T) \force t_0 \lhd \dot{c}$, and pick $s_0 \in \cantorfin$ such that $[{t_0}^\conc s_0]  \cap B = \emptyset$. We can then extend $p$ to $p'$ in order to \emph{follow} $s_0$, since we can freely choose the subsequent splitting nodes extending the leftmost branch. Hence, $(p',T') \force {t_0}^\conc s_0 \lhd \dot{c} \notin B$.   

\end{proof}

We are therefore interested in introducing a finer version of amoeba-Sacks which does not add Cohen reals. Actually we will do more, by showing that our forcing satisfies the Laver property.

Before going on we need to introduce some notation:
\begin{itemize}
\item given a perfect tree $T$, consider the natural $\unlhd$-isomorphism $e: \splitting(T) \to 2^{<\omega}$ and put on $\splitting(T)$ the following order:  
\[
s \preccurlyeq t \text{ iff } |e(s)| < |e(t)| \vee ( |e(s)|=|e(t)| \land e(s) \leq_{\text{lex}} e(t)). 
\]
\[
s \prec t \text{ iff } s \preccurlyeq t \land s \neq t.
\]
We will say that $t \in \splitting(T)$ has \emph{depth} $n$ (and we will write $d(t,T)=n$)  iff there is a maximal $n \in \omega$ such that there are $t_0, \dots, t_{n-1} \in \splitting(T)$ with $t_0 \prec \dots t_{n-1} \prec t$ (in case there are no such $t_j$'s we say that $t$ has depth $0$, i.e., $t$ is the $\stem(T)$.) 
\item $T \restric^* n := \{ t \in T: \exists k \leq n \exists s \in \splitting(T) \exists i \in \{ 0,1 \} (s \text{ has depth $k$} \land (t \unlhd s^\conc i) \}$. 
\item Given $T,T'$ perfect trees, we define 
\[  
T' \subseteq_n T \Leftrightarrow T' \subseteq T \land T' \restric^* n = T\restric^* n.
\]
\end{itemize}
\begin{definition}
We say that a tree $T$ is \enfa{good} iff for every $s,t \in \splitting(T)$, one has 
$s \preccurlyeq t \Rightarrow |s| \leq |t|$. 
\end{definition}
We then define our version of amoeba-Sacks $\asacks$ as follows: a pair $(p,T) \in \asacks$ iff $T$ is a good perfect tree  
and $p \sqsubset T$.
The order is given by $(p',T') \leq (p,T)$ iff $T' \subseteq T$ and $p'$ end-extends $p$.

\begin{remark}
Given a perfect tree $T$ there exists a good perfect tree $T' \subseteq T$. In fact, we can build a sequence $\{T_n : n \in \omega  \}$ such that for every $n \in \omega$, $T_{n+1} \subseteq_n T_n$ and $T_n \restric^* n$ is good, by using the following recursive pruning-argument: 
\begin{itemize}
\item start from $T_0:=T$;
\item assume $T_n$ already defined and pick the node $t \in T_n$ with $d(t, T_n)=n$. If $T_n {\restric}^* n$ is good, then put $T_{n+1}:=T_n$; otherwise, \emph{cut} the splitting node, by removing the part of $T_n$ above $t^\conc 1$, go to the next splitting node and keep cutting as far as one finds a tree $S$ so that $S {\restric}^* n$ be good. Let $T_{n+1}:=S$.
\item Put $T':= \bigcap_{n \in \omega} T_n$.
\end{itemize}

Throughout this section, we will use the symbol $T' \good T$ for denoting the good perfect subtree $T'$ of $T$, built via this pruning-argument. Note that such $T'$ is uniquely determined.
\end{remark}

First of all, we check that the name amoeba-Sacks be justified.
\begin{lemma}
Let $G$ be $\asacks$-generic over $\model{N}$ and let $\model{M} \supseteq \model{N}[G]$ be a ZFC-model. Then 
\[
\model{M} \models \forall T \in \model{N}  \cap \sacks \quad \exists T' \in \model{M} \cap \sacks \quad ([T'] \subseteq [T] \land [T'] \subseteq \sacks(\model{N})),
\] 
where $\sacks(\model{N})$ is the set of Sacks generic reals over $\model{N}$.
\end{lemma}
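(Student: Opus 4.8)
The plan is to verify Definition \ref{def:amoeba} directly: given a ground model Sacks tree $T \in \model{N} \cap \sacks$, I must produce inside $\model{M} \supseteq \model{N}[G]$ a perfect tree $T'$ with $[T'] \subseteq [T]$ all of whose branches are Sacks-generic over $\model{N}$. The key idea is that the generic object $T_G := \bigcap \{ T : \exists p\, (p,T) \in G \}$ is itself a perfect tree whose branches should be the witnesses; so the real task is to relativize $T_G$ to the given ground-model tree $T$ and then show genericity of each branch.

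First I would observe, via a density argument, that for any fixed $T \in \model{N} \cap \sacks$ the set of conditions $(p,S)$ with $S \subseteq T$ is dense below any condition forcing nothing about $T$ — more carefully, I would work below a condition whose second coordinate is already a good perfect subtree of $T$ (which exists by the pruning Remark, producing $T^{\circ} \good T$). Restricting attention to $G$ below such a condition, the generic tree $T_G$ satisfies $[T_G] \subseteq [T]$. Then I would set $T' := T_G$ and argue that every $x \in [T_G]$ is Sacks-generic over $\model{N}$. The engine for this is the standard amoeba phenomenon: for each branch $x \in [T_G]$ and each open dense $D \subseteq \sacks$ lying in $\model{N}$, a genericity/density argument inside $\asacks$ shows that $G$ meets the set of conditions $(p,S)$ which force that the ``local'' Sacks tree determined along $x$ has already entered $D$. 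Concretely, I would fix $D \in \model{N}$ open dense in $\sacks$ and show by density that the collection of $(p,S) \in \asacks$ such that every branch of $S$ passes through a subtree belonging to $D$ is dense; this uses that $S$ is perfect and that one can, below any node, prune each $S_t$ into some element of $D$ while keeping perfection and goodness.

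The main obstacle I expect is exactly this uniform pruning step: I need that a single condition can simultaneously refine $S$ along all branches so that each branch's induced tree lands in the same open dense set $D$, and that the resulting tree is still \emph{good} (so that it remains a legal second coordinate) and still end-extends $p$. Handling goodness is the delicate part — the order $\preccurlyeq$ on splitting nodes and the length-monotonicity requirement $s \preccurlyeq t \Rightarrow |s| \leq |t|$ must be preserved under the refinement, and the earlier pruning Remark only guarantees goodness can be \emph{recovered}, not that it survives an arbitrary refinement. I would therefore combine the refinement into $D$ with a subsequent application of $\good$ to restore goodness, checking that this does not destroy membership in $D$ (since $D$ is open and the restored tree is a subtree).

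Finally, I would assemble the pieces using a fusion/Axiom A style argument: enumerate the open dense sets of $\sacks$ belonging to $\model{N}$ as $\langle D_k : k \in \omega \rangle$ (possible since I only need genericity over $\model{N}$, and in $\model{M}$ these form a set), and build a $\leq_n$-decreasing sequence of conditions meeting each $D_k$ in the uniform sense above, fusing to a condition $(p,T^{*})$ below which $G$ forces that $T_G \subseteq T$ and every branch of $T_G$ meets every $D_k$. By genericity of $G$ this $(p,T^{*})$ is in $G$, so $T' := T_G$ works: $[T'] \subseteq [T]$ and each $x \in [T']$ meets every $D_k \in \model{N}$, hence is Sacks-generic over $\model{N}$. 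This completes the verification that $\asacks$ is an amoeba-Sacks in the sense of Definition \ref{def:amoeba}.
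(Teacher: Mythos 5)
Your proposal contains genuine gaps, two of them fatal. The first is the relativization step. For a fixed $T \in \sacks \cap \model{N}$, the collection of conditions $(p,S) \in \asacks$ with $S \subseteq T$ is \emph{not} dense: any condition whose finite part $p$ already contains a node outside $T$ has no extension with second coordinate inside $T$ (and even when $p \subset T$, the intersection $S \cap T$ need not be perfect above every terminal node of $p$). Consequently genericity gives you no condition in $G$ with second coordinate inside $T$, and ``restricting attention to $G$ below such a condition'' is not an available move: $G$ is a single fixed filter, and the single tree $T_G$ cannot lie inside all ground-model perfect trees simultaneously (consider the full trees above $\langle 0\rangle$ and above $\langle 1\rangle$). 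So at best your argument treats the generic tree itself, not an arbitrary $T \in \sacks \cap \model{N}$, which is the whole content of the quantifier $\forall T$. The paper handles this with a different idea: for each ground-model $T$, the restricted forcing $\asacks_T := \{(p,S)\in\asacks : S \subseteq T\}$ is isomorphic to $\asacks$, so the image of $G$ under a ground-model isomorphism is $\asacks_T$-generic over $\model{N}$, and \emph{its} generic tree is the desired perfect subtree of $T$. Some such transfer argument is indispensable and is entirely missing from your write-up.

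The second fatal gap is your final fusion step. Nothing in the hypotheses makes $\model{N}$ countable in $\model{M}$, so the open dense subsets of $\sacks$ lying in $\model{N}$ need not be enumerable in order type $\omega$; worse, a tree $T^*$ produced by a fusion carried out in $\model{M}$ along an $\model{M}$-enumeration need not belong to $\model{N}$, so $(p,T^*)$ is not even a condition of the forcing; and in any case ``by genericity of $G$ this $(p,T^*)$ is in $G$'' is a non sequitur --- genericity never places a specific condition into $G$, it only guarantees that $G$ meets every dense set \emph{lying in $\model{N}$}. No fusion is needed at all: for each single open dense $D \subseteq \sacks$ in $\model{N}$, the set $E_D$ of conditions $(p,S)$ such that above each $t \in \term(p)$ there is a ground-model $T_t' \in D$ with $S {\downarrow} t \subseteq T_t'$ is dense in $\asacks$ and belongs to $\model{N}$ (this is exactly your covering idea, and is the paper's first step); applying genericity to each $E_D$ separately finishes the job. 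Two further points you omit: (i) since the covering witnesses are finitely many ground-model trees indexed by the finite set $\term(p)$, the statement ``every branch of $T_G$ passes through a member of $D$'' is absolute and hence holds for branches of $T_G$ lying in $\model{M}\setminus\model{N}[G]$, which the lemma explicitly requires (the paper makes this precise via fronts and $\Pi^1_1$-absoluteness); (ii) you never verify that $H_z = \{S \in \sacks\cap\model{N} : z \in [S]\}$ is a \emph{filter}: two Sacks trees through a common branch can be incompatible, so meeting every dense set does not by itself yield genericity. The paper proves this by noting that incompatible $T_1,T_2 \in H_z$ would have $[T_1]\cap[T_2]$ countable, and the ground-model dense set of Sacks trees avoiding those countably many reals yields a contradiction.
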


\begin{proof} It is analogous to the argument used for other notions of amoeba, see \cite{L14} for Silver and \cite{S95} for Laver and Miller. Since such an argument is not widely known, we give it here for completeness.  We first check that $T_G:= \bigcup \{p: \exists T ((p,T) \in G)  \} \subseteq \sacks(\model{N})$ in $\model{M}$, i.e., every $x \in [T_G] \cap \model{M}$ is Sacks generic over $\model{N}$, and we then see how to find a \emph{copy} of $T_G$ inside any perfect tree $T \in \model{N}$.

Given $(p,T) \in \asacks$ and $D \subseteq \sacks$ open dense, we build $T^* \subseteq T$ as follows: let $\{t_0, \dots, t_n \}$ enumerate all terminal nodes of $p$, and, for every $j \leq n$, pick $T_j \subseteq T_{t_j}$ such that $T_j \in D$; then put $T^* \good \bigcup \{ T_j: j \leq n \}$.
By construction, we obtain $(p,T^*) \force \forall z \in [T_G](H_z \cap D \neq \emptyset)$, where $H_z$ is defined by $H_z= \{ S \in \sacks \cap \model{N}: z \in [S] \}$.

We have just shown $H_z$ to be generic. It is then left to show that it is a filter: towards contradiction, assume there are $T_1, T_2 \in H_z$ incompatible (note that by absoluteness they are incompatible in $\model{N}$ as well). Hence, $[T_1] \cap [T_2]$ is countable, i.e., $[T_1] \cap [T_2]= \{ x_i: i \leq \omega \}$. Then $E:= \{ T \in \sacks: \forall i \in \omega (x_i \notin [T])  \}$ is an open dense set in the ground model \model{N}, and so, by genericity, there is $T \in E$ and $z \in [T]$, which is a contradiction.

We remark that the argument we used so far works not only for $z \in \model{N}[G]$, but even for all $z \in \model{M}$. In fact, the above argument shows that we can find a front $F \subseteq T_G$, i.e., a set such that for every $t \in F$ we have $(T_G)_t \in D$, and so, since being a front is $\Pi^1_1$, it follows that $F$ remains a front in any ZFC-model $\model{M} \supseteq \model{N}[G]$, and so for every $z \in [T_G] \cap \model{M}$, $\model{M} \models H_z \cap D \neq \emptyset$. 

It is then left to show that we can find a tree $T'$ only consisting of Sacks generic reals, inside any perfect $T$ of the ground model. To this aim, it is enough to note that, for any $T \in \sacks \cap \model{N}$,  the forcing $\asacks_{T}$ defined as $\asacks_{T} := \{ (p,S) \in \asacks : S \subseteq T \}$, with the analogous order, is isomorphic to $\asacks$.

\end{proof}

\begin{remark} \label{remark:pmaximal}
Let $(p_0,T) \in \asacks$. By goodness, there exists $p \sqsupseteq p_0$ maximal (w.r.t. $\sqsubseteq $) such that for every $T' \subseteq T$ with $(p_0,T') \in \asacks$ one has $p \sqsubset T'$ (in particular, every $(q,S) \leq (p_0,T)$ is compatible with $(p,T)$ and so the two conditions are forcing equivalent). Note that such $p$ is of the form $T{\restric}^*n$, for some $n$, \emph{but} with every terminal node of the latter extended to the corresponding subsequent splitting node.
\end{remark}

To show that $\asacks$ satisfies the Laver property, we first have to introduce a notion of $\leq_n$: 
\[
(p_0',T') \leq_n (p_0,T) \Leftrightarrow (p_0',T') \leq (p_0,T) \land p_0=p_0' \land T' \subseteq_{n+N} T,
\]
where $N:= \max \{ k \in \omega: \exists t \in \splitting(p)(d(t,p)=k) \}$, with $p \sqsupseteq  p_0$ as in Remark \ref{remark:pmaximal}. 
$\asacks$ satisfies axiom A, and the proof works similarly to the one for amoeba-Silver $\asilver$ viewed in the previous section. In fact, $\asacks$ satifies quasi pure decision, together with an akin version of Lemma \ref{lemma:silver-3cond-axA}.  
\begin{lemma} \label{lemma:well-known}
$\asacks$ has quasi pure decision, i.e., given $D \subseteq \asacks$ open dense, $(p,T) \in \asacks$ and $m \in \omega$, there exists $T^+ \subseteq T$ such that $(p,T^+) \leq_m (p,T)$ and
\[
\text{$q$ is deciding} \Rightarrow (q, T^+ {\downarrow} q) \in D.
\]
\end{lemma}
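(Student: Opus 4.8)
The plan is to run a fusion argument that parallels, almost verbatim, the proof of Lemma \ref{fact:VT-axiomA} for amoeba-Silver, with two modifications dictated by the passage from Silver trees to perfect trees. First, the uniform finite trees $p^k_j$ are replaced by the (finitely many) finite subtrees $q$ of $T$ that end-extend the maximal $p' \sqsupseteq p$ of Remark \ref{remark:pmaximal} and reach a prescribed splitting depth. Second, the translation operator $\otimes$, which in the Silver case served only to restore uniformity after a local decision, is dropped entirely, since perfect trees carry no uniformity constraint and the cones above distinct terminal nodes may be pruned independently.

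First I would fix $D$, $(p,T)$ and $m$, let $p' \sqsupseteq p$ and $N$ be as in Remark \ref{remark:pmaximal} and in the definition of $\leq_n$, and arrange the recursion so that no splitting node of $T$ of depth $\leq m+N$ is ever touched; this guarantees at the outset that the output will satisfy $T^+ \subseteq_{m+N} T$, i.e. $(p,T^+) \leq_m (p,T)$. I would then build a $\subseteq$-decreasing sequence $T = T^0 \supseteq T^1 \supseteq \cdots$ where, at stage $k$, I enumerate the finitely many finite subtrees $q$ with $p' \sqsubseteq q$ whose terminal nodes sit at splitting depth $m+N+k$, and process them one at a time inside the current tree: if $q$ is deciding — that is, if there is $S \subseteq T^k{\downarrow}q$ with $(q,S) \in D$ — then I replace the cones of $T^k$ strictly above $\term(q)$ by those of such an $S$, and otherwise I leave the tree unchanged. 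After all $q$ at this depth have been treated I apply the pruning of the remark introducing the operation $\good$ to the current tree, obtaining a good perfect tree $T^{k+1}$ with $T^{k+1} \subseteq_{m+N+k} T^k$. The fusion $T^+ := \bigcap_{k} T^k$ is then a good perfect tree with $(p,T^+) \leq_m (p,T)$. This is exactly the step where the assumption ``$m = h_0$'' in Lemma \ref{fact:VT-axiomA} is replaced by starting the recursion at depth $m+N$.

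It remains to read off the decision property. Suppose $q$ is deciding with respect to $T^+$, witnessed by $S \subseteq T^+$ with $(q,S) \in D$. Since $T^+ \subseteq T^k$ for every $k$, the same $S$ witnesses that $q$ is still deciding at the stage $k_q$ at which it is processed; hence at that stage I pruned the cones above $\term(q)$ into some $S'$ with $(q,S') \in D$, and all later operations only shrink the tree, so $T^+{\downarrow}q \subseteq S'$. As $(q,T^+{\downarrow}q) \leq (q,S')$ and $D$ is open, I conclude $(q,T^+{\downarrow}q) \in D$, which is precisely the claim. (If $q$ does not remain an initial segment of any good perfect subtree of $T^+$, then $q$ is not deciding with respect to $T^+$ and the implication holds vacuously.)

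The main obstacle I anticipate is the bookkeeping needed to reconcile goodness with the local prunings. Two points require care. First, re-applying $\good$ after each stage must not destroy the decisions already committed: this holds because, since the tree is already good below the current depth, the pruning-argument of the remark above only cuts splitting nodes strictly above $\term(q)$, so a committed pair $(q,S)$ is replaced at worst by some $(q,S') \leq (q,S)$, which remains in $D$ by openness. Second, the finitely many configurations $q$ treated at a fixed depth must not interfere; this is arranged by processing them in a fixed order inside the running tree — exactly as the substeps $j = 0,1,\dots$ are handled in Lemma \ref{fact:VT-axiomA} — so that each shrinking acts only on cones lying strictly above the depth being decided, and later shrinkings merely refine earlier ones. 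Verifying that $\good$ preserves the relations $\subseteq_{m+N+k}$ and the perfectness of every localization $T^+{\downarrow}q$ is the only genuinely technical point, and it is essentially the content of the pruning-argument already established before the lemma.
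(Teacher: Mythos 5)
Your overall strategy is the paper's own: the paper proves this lemma by a sketched fusion argument that defers to Lemma \ref{fact:VT-axiomA}, building a sequence $T = T^0 \supseteq T^1 \supseteq \cdots$ indexed by splitting depth, at stage $j$ handling the finite subtrees of the current restriction $q_j = T^{j-1}\restric^*(m+j)$ by a shrinking argument, and taking $T^+ = \bigcap_j T^j$; your handling of the offset $N$ in the definition of $\leq_m$, and of the interaction between $\good$ and already-committed decisions via openness of $D$, is if anything more explicit than the paper's sketch.

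There is, however, one concrete gap in your stage-$k$ enumeration. You process only those $q \sqsupseteq p'$ whose terminal nodes all sit at splitting depth $m+N+k$. In the Silver case this restriction is harmless: an $\asilver$-condition has first coordinate of the form $T{\mid}n$, so every relevant finite tree has all terminal nodes at one and the same height, which is why Lemma \ref{fact:VT-axiomA} may confine itself to the full-height trees $p^k_j$. For $\asacks$ this fails: a condition $(q,S)$ only requires $q \sqsubset S$, so $q$ may be \emph{uneven}, with terminal nodes at different splitting depths (stopping at the first splitting node on one side of the stem while continuing through several on the other). Such $q$ are never processed at any stage of your recursion, so in your final verification the phrase ``the stage $k_q$ at which it is processed'' refers to nothing, and nothing forces $T^+{\downarrow}q$ to lie below a witness for $q$. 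These uneven $q$ cannot be ignored --- the lemma is applied, e.g.\ as $(\star_1)$ in the pure decision proof, to arbitrary $(q,S) \leq (p_0,T^*)$ --- and they cannot be reduced to the full-depth case: extending $q$ to a full-depth $\bar q$ with $q \sqsubseteq \bar q \sqsubset S$ yields a \emph{stronger} condition $(\bar q, S) \leq (q,S)$, and openness of $D$ only transfers membership to stronger conditions, so knowing $(\bar q, T^+{\downarrow}\bar q) \in D$ gives nothing about $(q, T^+{\downarrow}q)$. The repair is simple and is exactly what the paper's sketch stipulates (``$\forall q\,(p \sqsubseteq q \subseteq q_j \wedge q$ is deciding $\Rightarrow \dots)$''): at stage $k$, process \emph{all} finite subtrees $q$ with $p \sqsubseteq q \subseteq T^k\restric^*(m+N+k)$, with terminal nodes at arbitrary depths up to $m+N+k$; re-processing at a later stage a $q$ already committed earlier is harmless, since any new witness is found inside the current, already committed, localization, and openness keeps the commitment inside $D$.
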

\begin{proof}[Sketch of the proof.] It is analogous to that of Lemma \ref{fact:VT-axiomA} for $\asilver$. Given $D \subseteq \asacks$ open dense and $(p,T) \in \asacks$, we can build $T^+ \subseteq_m T$ with the desired property, for some arbitrary fixed $m \in \omega$, by using the following inductive argument: start with $q^0=T \restric^* m$ and $T^0=T$. for $j >0 $, let $q_j= T^{j-1} \restric^* (m+j)$. Then use an analogous \emph{shrinking-argument} as in the proof of Lemma \ref{fact:VT-axiomA} in order to get $T^{j} \subseteq_{m+j} T^{j-1}$ so that 
\[
\forall q (p \sqsubseteq q \subseteq q_j \land q \text{ is deciding } \Rightarrow (q, T^j {\downarrow} q) \in D).
\]    
Finally put $T^+= \bigcap_{j \in \omega} T_j$. We then get $(p,T^+) \leq_{m} (p,T)$ with the required property.
\end{proof}

Note that even the standard amoeba-Sacks satisfies quasi pure decision, and the argument for proving that is analogous.
\begin{lemma}
$\asacks$ has pure decision, i.e., given a formula $\varphi$ and a condition $(p_0,T) \in \asacks$, there exists $(p_0,T') \leq_0 (p_0,T)$ such that $(p_0,T') \in D$, with $D =\{ (q,S) \in \asacks: (q,S) \force \varphi \vee (q,S) \force \neg \varphi \}$.   
\end{lemma}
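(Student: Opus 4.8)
The plan is to run a Prikry-style argument on top of quasi pure decision, the genuinely new ingredient being an amalgamation step that Silver's rigidity forbids (which is exactly why $\asilver$ fails pure decision; cf.\ Remark~\ref{remark:silver-cohen}). First I would record that $D$ is open dense: it is open because forcing a sentence is preserved under refinement, and dense because any condition either forces $\varphi$ or has an extension forcing $\neg\varphi$. Applying Lemma~\ref{lemma:well-known} with $m=0$ I obtain $(p_0,T^+)\leq_0(p_0,T)$ such that every deciding finite subtree $q$ already yields $(q,T^+{\downarrow}q)\in D$. Since $D$ is open, for such $q$ the condition $(q,T^+{\downarrow}q)$ forces a definite value $F(q)\in\{\varphi,\neg\varphi\}$; moreover any $(q,S)\leq(p_0,T^+)$ with $(q,S)\in D$ also forces $F(q)$, because $q\sqsubset S\subseteq T^+$ forces $S\subseteq T^+{\downarrow}q$, so that $(q,S)\leq(q,T^+{\downarrow}q)$.

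The reduction I would then carry out is the following. It suffices to find a pure extension $(p_0,T')\leq_0(p_0,T^+)$ together with a fixed $\varepsilon\in\{\varphi,\neg\varphi\}$ such that $F(q)=\varepsilon$ for every deciding $q$ admitting a perfect $S\subseteq T'$ with $q\sqsubset S$. Indeed, by density the conditions lying in $D$ are dense below $(p_0,T')$, each such condition is of the form $(q,S)$ with $q$ deciding and $q\sqsubset S\subseteq T'$, and hence forces $F(q)=\varepsilon$; therefore $\varepsilon$ holds densely below $(p_0,T')$, i.e.\ $(p_0,T')\force\varepsilon$, and we are done. Thus the entire problem reduces to \emph{uniformizing} the decision map $F$ by a pure shrinking, and this is precisely the point where quasi pure decision alone — which $\asilver$ enjoys as well — is insufficient.

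To uniformize $F$ I would build $T'=\bigcap_{k}T_k$ by fusion, setting $T_0=T^+$ and choosing $T_{k+1}\leq_k T_k$ so that only the part of the tree lying above $T_k\restric^*(N+k)$ is altered at stage $k$, where $N$ is as in the definition of $\leq_n$ and Remark~\ref{remark:pmaximal}; then $T'\subseteq_N T^+$, that is $(p_0,T')\leq_0(p_0,T^+)$. At stage $k$, for each splitting node $t$ of depth $N+k$ I would inspect its two immediate subtrees, drive each side to a definite value by quasi pure decision applied to the two sub-conditions, keep the split when the two values can be made to agree, and otherwise \emph{cut} it, retaining the side carrying the target value $\varepsilon$ — this is legitimate exactly because such a split lies above the frozen part $T^+\restric^*N$ and so may be removed without violating $\leq_0$. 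The delicate point, which I expect to be the real obstacle, is the dichotomy guaranteeing that for a suitable choice of $\varepsilon$ cofinally many splits along every branch can be \emph{kept}, so that $T'$ remains perfect (and can be taken good via the $\good$-pruning). This is where goodness is indispensable: the length-synchronisation $s\preccurlyeq t\Rightarrow|s|\leq|t|$ keeps the two sides of each split balanced, which I expect to force the agreeable splits to be cofinal and hence the fusion to converge to a perfect good tree; by contrast, the uniform splitting of Silver trees blocks this balancing and surfaces as the Cohen real of Remark~\ref{remark:silver-cohen}, so that the uniformization, and with it pure decision, genuinely fails for $\asilver$. Once $T'$ is produced, the reduction above gives $(p_0,T')\in D$, which completes the proof.
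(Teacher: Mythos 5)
Your setup is fine (openness and density of $D$, quasi pure decision giving the analogue of $(\star_1)$, the value map $F$), but notice that your ``reduction'' is essentially a restatement of the lemma rather than progress towards it: as you yourself observe, if $(p_0,T')$ decides $\varphi$ then $F$ is automatically constant on all deciding $q \sqsubset S \subseteq T'$, so ``uniformizing $F$ on a pure extension'' is \emph{equivalent} to pure decision. The whole content of the lemma therefore sits in your final paragraph, and that paragraph has a genuine gap, which you flag yourself (``I expect \dots''). Concretely: (a) ``drive each side to a definite value by quasi pure decision'' is unjustified --- quasi pure decision assigns no value to the sub-condition living above $t^\conc i$; producing a definite value there is precisely pure decision for that sub-condition, so this step is circular; (b) your cut-or-keep rule breaks down when \emph{both} sides of a split carry the value $\neg\varepsilon$ (cutting retains no $\varepsilon$-side), and you give no argument that a single $\varepsilon$ can be chosen so that keepable splits occur cofinally along every branch, i.e.\ that your fusion converges to a perfect tree; (c) even granting per-split agreement, constancy of $F$ on single splits does not yield constancy of $F$ on arbitrary deciding finite subtrees $q \sqsubset S \subseteq T'$: such $q$ contain several splitting nodes spread over different branches, and the value forced by $(q,T^+{\downarrow}q)$ is not determined by values attached to its splitting nodes one at a time.

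The paper resolves exactly these points by a different strategy, which your proposal is missing. It argues by contradiction: assume \emph{no} pure extension of $(p_0,T)$ lies in $D$. Then (Claim \ref{claim1}) for the relevant configurations there are \emph{perfectly many} splitting nodes $s$ such that $p \oplus s$ is \emph{not} deciding --- for otherwise one could extract a perfect set of splitting nodes all deciding the same way and glue them together (via the $\good$-pruning) into a pure extension lying in $D$, contradicting the assumption. Claims \ref{claim2} and \ref{claim3} then handle precisely the multi-node interaction described in (c): they fuse these perfect sets of non-deciding nodes so that the limit tree $T^*$ satisfies $(\star_2)$, i.e.\ below $(p_0,T^*)$ every condition $(q,S)$ admits $q'$ with $q \sqsubseteq q' \sqsubset S$ and $(q',T^*{\downarrow}q') \notin D$; combined with $(\star_1)$ this contradicts density of $D$. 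In short, the key idea is the abundance of \emph{non-deciding} finite trees under the contradiction hypothesis, together with a fusion controlling all finite subtrees simultaneously --- not a positive per-split dichotomy, which is what your construction would need and does not establish.
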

\begin{proof}
First of all, let $p \sqsupseteq p_0$ be as in Remark \ref{remark:pmaximal}.
The idea of the proof by contradiction is the following. Assume there is no $T' \subseteq T$ such that $(p,T') \in D$, and so also no $(p_0,T') \in D$. We will construct $T^* \subseteq T$ such that $(p, T^*) \in \asacks$ and for every $(q,S) \leq (p_0,T^*)$ one has:
\begin{itemize}
\item[$(\star_1)$] if $q$ is deciding then $(q,T^* {\downarrow}q) \in D$ (this can be done by virtue of Lemma \ref{lemma:well-known});
\item[$(\star_2)$] there exists $q'$ such that $q \sqsubseteq q' \sqsubset S$, $q' \sqsupseteq p$ and $(q',T^*{\downarrow}q') \notin D$.
\end{itemize}
This two facts obviously contradict $D$ being dense.

We use the following notation: for every $s \in \splitting(T)$, $p$ finite tree,
\[
p \oplus s :=  \{ t : t \in p \vee \exists i \in \{ 0,1 \} (t \unlhd s^\conc i) \}.
\]

Let $t_0 \in p$ be such that $t_0 = r^\conc 0$, with $r \in \splitting(p)$ satisfying:
\begin{itemize}
\item[(i)] there is no $v \rhd r$ such that $v \in \splitting(p)$, and
\item[(ii)] $r$ has smallest depth with property (i), i.e., for every $u \in \splitting(p)$, if $d(u,p) < d(r,p)$ then there exists $u' \rhd u$ such that $u' \in \splitting(p)$. 
\end{itemize}
(In case $\splitting(p)= \emptyset$, let $t_0= \stem(T)^\conc 0$.)

We can assume $T$ to be as the $T^+$ of Lemma \ref{lemma:well-known}, so that $(\star_1)$ be satisfied. 
The rest of the proof is devoted to building $T^* \subseteq T$ satisfying $(\star_2)$ as well. We split it into three claims. 
\begin{claim} \label{claim1}
There are perfectly many $s_j$'s in $\splitting (T)$ extending $t_0$ such that $p \oplus s_j$ is not deciding.
\end{claim}

\begin{proof}
Assume, towards contradiction, that such a set were not perfect. Then one could find a perfect $P$ consisting of all $t\unrhd t_0$ in $\splitting(T)$ such that $(p \oplus  t , T {\downarrow} (p \oplus  t )) \in D$ and moreover 
\begin{itemize}
\item[(i)] either for all $t \in P$, $(p \oplus  t , T {\downarrow} (p \oplus  t )) \force \varphi$,
\item[(ii)] or for all $t \in P$, $(p \oplus  t , T {\downarrow} (p \oplus  t )) \force \neg \varphi$,
\end{itemize}
Hence, by letting $T^- \good \bigcup_{t \in P} (p \oplus t) \cup \bigcup \{ T_r: r \in T \land r \nparallel t_0 \}$ we would have 
\begin{itemize}
\item[] $\text{(i)} \Rightarrow \forall (q,S) \leq (p,T^-) \exists (q',S') \leq (q,S) ((q',S') \force \varphi) \Rightarrow (p,T^-) \force \varphi$ 
\item[] $\text{(ii)} \Rightarrow \forall (q,S) \leq (p,T^-) \exists (q',S') \leq (q,S) ((q',S') \force \neg \varphi) \Rightarrow (p,T^-) \force \neg \varphi$,
\end{itemize}
and so in both cases $(p,T^-) \in D$, contradicting our initial assumption.
\end{proof}

Let $S^1 := T^-$. Furthermore, note that $(p,S^1) \leq_0 (p,T)$. 
 \begin{claim} \label{claim2} 
Let $t_1 \in \splitting(S^1)$ such that $t_1 = r^\conc 1$, where $r$ is the same as in the definition of $t_0$ above. There exists $W \subseteq S^1_{t_1}$ perfect and good such that for every $u \in \splitting (S^1)$ extending $t_0$, for every $s \in \splitting(W)$, we have $p \oplus u \oplus s$ is not deciding.
\end{claim}
\begin{proof}
Let $u$ be the first splitting node of $S^1$ extending $t_0$. By an analogous argument as in the above lemma, we find perfectly many $s_j \in \splitting(S^1)$ extending $t_1$ such that, for every $j \in \omega$, $p \oplus u \oplus s_j$ is not deciding,  
otherwise $p \oplus u$ would be deciding, contradicting Claim \ref{claim1}. 
Let $R^0:=\{ s_j: j \in \omega \}$, $S^1_0 \good \bigcup_{s \in R^0} (p \oplus s) \cup \bigcup \{ (S^1_0)_t: t \in S^1_0 \land t  \nparallel t_1 \}$ and let $w$ be the shortest node in $S^1_0$ extending $t_1$. 

Then let $u_0$ be the first splitting node of $S^1_0$ extending $u^\conc 0$ and analogously $u_1$ the one extending $u^\conc 1$. By the usual argument, we find perfectly many $s$'s in $\splitting (S^1_0)$ extending $w^\conc 0$ such that
\begin{equation} \label{eq1}
p \oplus u_0 \oplus s \text{ is not deciding}.  
\end{equation} 
Let $P^0_0 \subseteq R^0$ be the set of such perfectly many nodes. Moreover, we also find perfectly many $s \in P^0_0$ such that 
\begin{equation} \label{eq2}
p \oplus u_1 \oplus s \text{ is not deciding}.  
\end{equation}
Let $P^0_1 \subseteq P^0_0$ be the set of such nodes.

A specular argument can be done also for $w^\conc 1$ in order to find $P^1_1 \subseteq R^0$ such that every $s \in P^1_1$ extends $w^\conc 1$ and satisfies both (\ref{eq1}) and (\ref{eq2}). Finally put $R^1= \{ w \} \cup P^0_1 \cup P^1_1$ (note that $R^1$ and $R^0$ have the same first node, namely $w$). 
Then put
\[
\begin{split}
S^1_1 \good  & \bigcup \{ p \oplus u \oplus s: u \in \splitting((S^1_0)_{t_0}), s \in R^1  \}  \cup \\ 
& \bigcup \{ (S^1_0)_t: t \in S^1_0 \land t \nparallel u \land t \nparallel w \}. 
\end{split}
\]
Furthermore let, for $i,j,k \in \{ 0,1 \}$,
\begin{itemize}
\item $w_i \unrhd w^\conc i$ be the first splitting node occurring in $R^1$;
\item $u_{kj} \unrhd {u_k}^\conc j$ be the first splitting node occurring in $S^1_1$ (note that, by goodness, for each $i \in \{ 0,1 \}$, one has $|u_{kj}| > |w_i|$). 
\end{itemize}
Note that $u_0, u_1 \in \splitting(S^1_1)$, since $|u_0|,|u_1| < |w_0|$ by goodness.

By repeating this procedure, we obtain, for $n \in \omega$, $R^n \subseteq R^{n-1}$ such that for  every $s \in R^n$, for every $\sigma \in 2^{\leq n}$,
$p \oplus u_\sigma \oplus s \text{ is not deciding}$, where we identify $u$ with $u_\emptyset$. Moreover, put
\[ 
\begin{split} 
S^1_n  \good & \bigcup \{ p \oplus u \oplus s: u \in \splitting((S^1_{n-1})_{t_0}), s \in R^n  \} \cup \\
& \bigcup \{ (S^1_{n-1})_t: t \in S^1_{n-1} \land t \nparallel u \land t \nparallel w \}.
\end{split}
\]
Note, for every $\sigma \in 2^n$, we have $u_\sigma \in \splitting(S^1_n)$.
Finally, put $R = \bigcap_{n \in \omega} R^n$ and $W= \bigcup \{ t: \exists s \in R (t \unlhd s)  \}$. Note that the definition of $R$ makes sense, since for every $n \in \omega$, $R^{n+1}\cap R^n \supseteq \{ w_\sigma: \sigma \in 2^{\leq n}  \}$, and so the construction is obtained by a kind of standard fusion argument (note that we identify $w$ with $w_\emptyset$). By construction, such $W$ has the required properties.
\end{proof}

Then define $S^2:= \bigcap_{n \in \omega} S^1_n$.
Note that $u^\conc 0, u^\conc 1 \in S^2 \cap S^1$ and therefore $(p,S^2) \leq_1 (p,S^1)$. 
\begin{claim} \label{claim3}
Let $t_n$ be as follows: if $t_{n-1}$ was of the form $r^\conc 0$, then let $t_n = r^\conc 1$; if $t_{n-1}$ was of the form $r^\conc 1$, then let $t_n= z^\conc 0$, where $z$ is the splitting node of $S^{n-1}$ such that $d(z,S^{n-1})=d(r,S^{n-1})+1$. 
There exists $W \subseteq (S^{n-1})_{t_n}$ perfect and good such that for every $A:= ( s_0, \dots, s_{n-1} ) \in (\splitting(S^{n-1}))^{n}$, for every $w \in \splitting(W)$, we have $p(A,w)$ is not deciding, where $p(A,w):= p \oplus s_0 \oplus \dots \oplus s_{n-1} \oplus w$.
\end{claim}

\begin{proof} 
The proof of Claim \ref{claim3} is a generalization of the one of Claim \ref{claim2}. 

Use the following notation: for $w \in \splitting(S^{n-1})$, let 
\[ 
\begin{split}
\mathfrak{A}(w, S^{n-1})= &\{ (s_0, \dots, s_{n-1}) \in (\splitting(S^{n-1}))^{n}: \\
						 &  p \oplus s_0 \oplus \dots \oplus s_{n-1} \oplus w \text{ is good}\}.
\end{split} 
\]
Note that $\mathfrak{A}(w, S^{n-1})$ is always finite. For any $A \in \mathfrak{A}(w, S^{n-1})$, say $A=(s_0, \dots  s_{n-1})$, we will use the notation $p(A,w)=p \oplus s_0 \oplus \dots \oplus s_{n-1} \oplus w$ (for $w \in \splitting(S^{n-1})$). 

We define the set $S^n$ as the limit of the following inductive construction:

\begin{itemize}
\item[\textsc{Step $0$}]: Let $p^+= p \oplus u_0 \oplus \dots \oplus u_{n-1}$, where each $u_j$ is the first splitting node occurring in $S^{n-1}$ extending $t_{j}$. By the usual argument, one can find perfectly many $s_j$'s extending $t_n$ such that, $p^+ \oplus s_j$ is not deciding, otherwise $p^+$ would be deciding.
Let $P_\emptyset$ be the set of such perfectly many $s_j$'s and $w_\emptyset$ its least element. Moreover, let
\[
\begin{split}
S^{n-1}_0  \good  & \bigcup \{ (S^{n-1})_{u_j}: j < n \} \cup \bigcup \{ p^+ \oplus s: s  \in P_{\emptyset} \} \cup  \\
&  \bigcup \{ (S^{n-1})_t: t \in S^{n-1} \land \forall j < n (t \nparallel u_j) \land t \nparallel w_\emptyset \}.
\end{split}
\]
For every $j \leq n-1$, $i \in \{ 0,1 \}$, pick $u_{j,i} \unrhd {u_j}^\conc i$ to be the first splitting node of $S^{n-1}_0$such that $|u_{j,i}| > |w_\emptyset|$. Finally let $A_0$ be the set of all such $u_{j,i}$'s and all $u_j$'s.
\item[\textsc{Step $l+1$}]: Assume $P_\sigma$, $w_\sigma$ and $u_{j,\sigma^\conc i}$ already constructed, for every $\sigma \in 2^{l}$, $i \in \{ 0,1 \} $. Remind that $A_l$ is the set of these $u_{j,\tau}$'s, for $\tau \in 2^{\leq l+1}$. For $i \in \{ 0,1 \}$, $\sigma \in 2^l$, find a perfect $P_{\sigma^\conc i} \subseteq P_\sigma {\downarrow} {w_{\sigma}}^\conc i$ such that, for all $s \in P_{\sigma^\conc i}$, for all $A \in \mathfrak{A}(s, S^{n-1}_l)$ we have $p(A,s)$ is not deciding.
Let
\[
\begin{split}
S^{n-1}_{l+1}  \good &  \bigcup \{ (S^{n-1}_l)_{u_{j}}, j < n \} \cup 
 \bigcup \{ p^+ \oplus s:  s \in P_\tau, \tau \in 2^{l+1} \} \cup \\
& \bigcup \{ (S^{n-1}_l)_t: t \in S^{n-1}_l \land \forall j < n (t \nparallel u_j) \land t \nparallel w_\emptyset \}.
\end{split}
\]
Then, for every $\sigma \in 2^l$, $\tau \in 2^{l+1}$, $j < n$, $i,k \in \{ 0,1 \}$, let:
\begin{itemize}
\item $w_{\sigma^\conc i} \unrhd {w_\sigma}^\conc i$ be the first splitting node in $P_{\sigma^\conc i}$;
\item $u_{j, \tau^\conc k} \unrhd {u_{j,\tau}}^\conc k$ be the first splitting node in $S^{n-1}_{l}$ such that, for all $\varsigma \in 2^{l+1}$, $|u_{j,\tau^\conc k}| > |w_{\varsigma}|$.
\end{itemize}
Finally let $A_{l+1}$ be the set of such $u_{j,{\nu}}$'s, for $\nu \in 2^{\leq l+2}$.
\end{itemize}
We keep on the construction for every $l \in \omega$ and we finally put $R = \bigcap_{\sigma \in 2^{<\omega}} P_\sigma$ and $W = \{ t: \exists s \in R (t \unlhd s)  \}$. It follows from the construction that $W$ has the required properties.

Let $S^n:= \bigcap_{l \in \omega} S^{n-1}_l $. Note that, for all $j < n$, ${u_j}^\conc 0, {u_j}^\conc 1 \in S^n \cap S^{n-1}$, and hence $S^n \leq_n S^{n-1}$.
\end{proof}

By applying iteratively Claim \ref{claim3} for every $n \in \omega$, we end up with a perfect tree $T^*:= \bigcap_{n \in \omega} S^n$ (we identify $S^0$ with the tree $T$ which we started from).
It follows from the construction that $T^*$ satisfies $(\star_2)$, and so the proof is completed.
\end{proof}
Next lemma shows that $\asacks$ satisfies the $L_f$-property, with $f(n)=4^{n}$ (\cite[Definition 7.2.1]{BJ95}). Such a property, together with axiom A, implies that $\asacks$ satisfies the Laver property, and so it does not add Cohen reals (see \cite[Lemma 7.2.2-7.2.3]{BJ95}).
\begin{lemma}
Let $A$ be a finite subset of $\omega$ and $f(n)=4^{n}$. For every $n \in \omega$, $(p_0,T) \in \asacks$ the following holds: 

if $(p_0,T) \force \dot{a} \in A$ then there exists $(p_0,T') \leq_n (p_0,T)$ and $B \subseteq A$ of size $\leq f(n)$
such that $(p_0,T') \force \dot{a} \in B$. 
\end{lemma}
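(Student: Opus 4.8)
The plan is to combine the quasi pure decision of Lemma \ref{lemma:well-known} with a counting argument, organised as an induction on $n$ that mirrors the pure decision construction of Claims \ref{claim1}--\ref{claim3} but keeps frozen the splitting nodes that $\leq_n$ forbids us to move. First I would fix the open dense set $D := \{ (q,S) \in \asacks : \exists k \in A \, ((q,S) \force \dot{a} = k)\}$ of conditions deciding $\dot{a}$; it is dense because, iterating the pure decision lemma over the finitely many formulas ``$\dot{a}=k$'' ($k \in A$) and using $(p_0,T) \force \dot{a} \in A$, every condition has a $\leq_0$-extension in $D$. Applying Lemma \ref{lemma:well-known} with $m=n$ then yields $(p_0, T^+) \leq_n (p_0,T)$ such that every deciding finite tree $q$ (with $p \sqsubseteq q \sqsubset T^+$, where $p \sqsupseteq p_0$ is the saturated part of Remark \ref{remark:pmaximal}) satisfies $(q, T^+ {\downarrow} q) \in D$, hence forces $\dot{a}=k_q$ for a single value $k_q$.

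The crucial observation is that, since I am producing a $\leq_n$-extension, I may neither enlarge the first coordinate $p_0$ nor prune the frozen part $T^+ \restric^* (n+N)$; so although pure decision on its own would collapse $\dot{a}$ to one value, here the value realised by the generic is $k_q$ for the minimal deciding $q \sqsupseteq p$ through which it passes, and such a $q$ is forced to extend just beyond the frozen part. Setting $B := \{ k_q : q \sqsupseteq p \text{ minimal deciding}, \ q \sqsubset T^+ \}$, a front/genericity argument shows $(p_0, T^+) \force \dot{a} \in B$: given a generic $G \ni (p_0,T^+)$, some deciding condition in $G$ pins $\dot{a}$ to its value $a$, and the minimal deciding initial segment $q \sqsubset T_G$ lies in the indexing of $B$ with $k_q = a$ by compatibility. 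It then remains to bound $|B| \leq 4^n$, which I would prove by induction on $n$. For $n=0$ the frozen part is (forcing-equivalent to) $p$ itself, so pure decision collapses $\dot{a}$ to a single value and $|B| \leq 1 = 4^0$. For the inductive step, passing from $\leq_n$ to $\leq_{n+1}$ freezes exactly one further splitting node $s^*$, namely the one of depth $N+n+1$; since $s^*$ may no longer be moved, the pure-decision collapse of Claims \ref{claim1}--\ref{claim3} can fail precisely at $s^*$. I would rerun that construction with $s^*$ and its two immediate successors held fixed, so that the family of minimal deciding trees splits according to how the generic resolves $s^*$. Counting the resulting configurations --- the two successors of $s^*$ together with the two sides of the $\oplus$-construction of Claim \ref{claim2} --- multiplies the number of decided values by at most $4$, giving $|B| \leq 4 \cdot 4^n = 4^{n+1}$.

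The main obstacle is exactly this factor-$4$ step. Because amoeba-Sacks conditions are ``fat'' --- their finite part $q$ is a subtree rather than a single branch --- the decision of $\dot{a}$ cannot be localised below one node, so freezing $s^*$ forces one back into the global fusion of Claims \ref{claim1}--\ref{claim3} and one must track carefully how the deciding front branches over $s^*$ without being able to absorb it by a shrinking step. Once the bound $|B| \leq 4^n$ is secured, the lemma follows, and together with axiom A it yields the $L_f$-property with $f(n)=4^n$, hence the Laver property and the fact that $\asacks$ adds no Cohen reals, by \cite[Lemma 7.2.2--7.2.3]{BJ95}.
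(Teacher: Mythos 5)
Your first half is sound, and in fact close to how the paper operates: applying quasi pure decision (Lemma \ref{lemma:well-known}) to the dense set of deciding conditions, observing that comparable deciding finite trees must decide the same value, and concluding by genericity that $(p_0,T^+)\force \dot a\in B$ for your set $B$ of values of minimal deciding trees --- all of that works. The genuine gap is the bound $|B|\leq 4^n$, which is the entire content of the lemma. Quasi pure decision gives no control whatsoever over how many values the minimal deciding subtrees of $T^+$ realize: a priori $|B|$ can be as large as $|A|$, and $|A|$ has nothing to do with $n$. To cut the number of values down you must shrink the tree \emph{further}, below each possible configuration of a finite part inside the frozen region; but any such shrinking changes $B$. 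So your plan --- first fix $T^+$ by quasi pure decision, define $B$ from it, then prove $|B|\leq 4^n$ afterwards by induction --- is structurally inconsistent: the bound is not a property that the quasi-pure-decision tree has, it is a property one must build into a new tree by additional pruning.

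This surfaces precisely in your inductive step. The factor-$4$ count per newly frozen splitting node is the right combinatorics (it is exactly the paper's count: at each frozen splitting node a stronger condition's finite part can split, go left only, go right only, or avoid the node), but your justification does not translate into a forcing argument: a ``choice at $s^*$'' is not a condition, so there is nothing to which the inductive hypothesis can be applied below it, and the finite part of a condition interacts with \emph{all} frozen splitting nodes simultaneously, so the decided value does not factor node-by-node. The paper avoids the induction altogether and makes the configurations explicit: it enumerates the \emph{master subtrees} $q_0,\dots,q_N$ of the frozen part $q=T\restric^*(l+n)$ --- the at most $4^n$ possible traces of (the saturation of) a stronger condition's finite part on $q$ --- and runs a recursion of length $N$: at step $j$ it applies the pure decision lemma to the part of the current tree lying over $q_j$, obtaining a single value $b_j$, and then glues the shrunk piece back onto the rest of the tree via the $\good$-pruning, goodness of $q$ guaranteeing that no frozen splitting node is destroyed, so that the final condition is indeed a $\leq_n$-extension. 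Any extension of $(p_0,T')$ deciding $\dot a$ is then compatible with some $(q_j,T'{\downarrow}q_j)$, hence forces $\dot a=b_j$, giving $B=\{b_j:j\leq N\}$ of size at most $4^n$. This enumeration-plus-gluing is exactly what your inductive step would have to reconstruct; as written, it is asserted rather than proved, so the lemma is not established. (Your base case $n=0$, iterating pure decision over the finitely many formulas $\dot a=k$, $k\in A$, is fine.)
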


\begin{proof}
Let $(p_0,T) \in \asacks$, $n \in \omega$, $A \subseteq \omega$ finite and $\dot{a}$ name for an element of $A$. We aim at finding $T' \subseteq T$ such that $(p_0,T') \leq_n (p_0,T)$ and $B$ of size $\leq 4^{n}$ such that $(p_0,T') \force \dot{a} \in B$.  First of all, pick $p \sqsupseteq p_0$ as in Remark \ref{remark:pmaximal}.

Let $q= T \restric^*l+n$, where $l:= \max\{j \in \omega: \exists t \in \splitting(p)(d(t,p)=j)\}$. We call $q^*$ a \emph{master} subtree of $q$ iff it satisfies the following property: 
\begin{itemize}
\item[$(i)$] $p \sqsubseteq q^* \subseteq q$, with $q^* \setminus p \neq \emptyset$ and $q^*$ good;
\item[$(ii)$] $\forall t \in q^*  \exists t' \unrhd t (t' \in \term(q) \cap q^*)$.
\end{itemize}
Let $\Gamma:= \{q_j: j \leq N  \}$ be the set consisting of all master subtrees of $q$. Note that $N \leq 4^{n}$; in fact, a master subtree $q^*$ is uniquely determined by what we do on the splitting nodes of $q$, and so we have four choices for each $t \in \splitting(q)$: 
\begin{enumerate}
\item $t \in \splitting(q^*)$;
\item $t \notin \splitting(q^*)$ and $t^\conc 0 \in q^*$;
\item $t \notin \splitting(q^*)$ and $t^\conc 1 \in q^*$;
\item $t \notin q^*$.
\end{enumerate}
We also remark that the upper-bound $4^n$ is not optimal, since many combinations are forbidden, by goodness.  
Then consider the following recursive construction, for $j \leq N$: 
\begin{itemize}
\item by pure decision, pick $T_0 \subseteq T {\downarrow} q_0$ and $b_0 \in \omega$ such that $(p, T_0) \force \dot{a}=b_0$. Then put $S_1 \good \bigcup \{ T_t: t \in q \setminus q_0 \} \cup T_0$.
\item for $j+1$, by pure decision, pick $T_{j+1} \subseteq S_j {\downarrow} q_{j+1}$ and $b_{j+1} \in \omega$ such that $(p, T_{j+1}) \force \dot{a}=b_{j+1}$. Then put $S_{j+1} \good \bigcup \{ (S_j)_t: t \in q \setminus q_{j+1} \} \cup T_{j+1}$.
\end{itemize}

Finally, put 
\[
T':=  T_N \text{ and } B := \{ b_j: j \leq N \}.
\]

Note that, since $q$ is good, whenever we use $\good$, we certainly do not remove any splitting node of $q$, and so $(p_0,T') \leq_n (p_0,T)$.

Given any $(q',S) \leq (p_0,T')$ there exists $j \leq k$ such that $(q_j,T' {\downarrow}q_j)$ is compatible with $(q',S)$, and therefore either $(q',S)$ does not decide $\dot{a}$ or $(q',S) \force \dot a = b_j \in B$.
Hence, we obtain $(p_0,T') \force \dot{a} \in B$ and $|B| \leq f(n)$.
\end{proof}

We conclude with an application of our amoeba-Sacks to separate regularity properties, and then with an observation concerning finite product of amoeba-Sacks. We recall some standard definitions.
\begin{enumerate} 
\item We say $X \subseteq \cantor$ to be \emph{Sacks measurable} iff 
\[ 
\forall T \in \sacks \exists T' \subseteq T, T' \in \sacks ([T'] \subseteq X \vee [T'] \cap X = \emptyset).
\]
\item Let $\Gamma$ be a certain family of sets of reals. $\Gamma(\textsc{Sacks})$ is the statement asserting that all sets in $\Gamma$ are Sacks measurable. Analogously, $\Gamma(\textsc{Baire})$ stands for all sets in $\Gamma$ have the Baire property.
\item for $z \in \cantor$, $X \subseteq \cantor$ is said to be \enfa{provable $\Delta^1_n(z)$} iff there are $\Sigma^1_n(z)$ formulae $\varphi_0$ and $\varphi_1$ such that $X=\{ x \in \cantor: \varphi_0(x) \}= \{ x \in \cantor: \neg \varphi_1(x) \}$ and $\text{ZFC} \vdash \forall x \in \cantor (\varphi_0(x) \Leftrightarrow \neg \varphi_1(x))$. The corresponding family of provable $\DDelta^1_n$ sets is denoted by \textbf{p}$\DDelta^1_n$. 
\end{enumerate}

\begin{lemma} \label{lemma:delta13}
Let $G$ be $\asacks_{\omega_1}$-generic over $\model{L}$, where $\asacks_{\omega_1}$ is the iteration of length $\omega_1$ of $\asacks$ with countable support. Then 
\[
\model{L}[G] \models \emph{\textbf{p}}\DDelta^1_3(\textsc{sacks}) \land \neg \DDelta^1_2(\textsc{Baire})
\]
\end{lemma}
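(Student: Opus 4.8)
The plan is to prove the two conjuncts by separate arguments: the positive statement $\textbf{p}\DDelta^1_3(\textsc{Sacks})$ via the abundance of amoeba-Sacks generics produced by the iteration, and the failure $\neg\DDelta^1_2(\textsc{Baire})$ via the Laver property.

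For the first conjunct I would appeal to the Solovay-type characterization of provable-$\DDelta^1_3$ regularity (see \cite{K12} for the general framework), which in the present situation states that $\textbf{p}\DDelta^1_3(\textsc{Sacks})$ holds in a model $\model{M}$ if and only if for every real $x \in \model{M}$ there is an $\asacks$-generic real over $\model{L}[x]$---equivalently, a perfect tree all of whose branches are $\sacks$-generic over $\model{L}[x]$, which is exactly what an amoeba-Sacks generic furnishes by the amoeba lemma established above. Conceptually, the amoeba raises the Solovay characterization by one level: a single $\sacks$-generic over $\model{L}[x]$ witnesses $\DDelta^1_2(\textsc{Sacks})$, while the perfect tree of generics is what absorbs the extra existential real-quantifier in passing to $\SSigma^1_3$. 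The hypothesis that the set is \emph{provably} $\DDelta^1_3$ is used precisely here: it guarantees that the two $\SSigma^1_3$ sides $\varphi_0, \varphi_1$ remain complementary in $\model{L}[x]$, in its amoeba extension, and in $\model{L}[G]$, so that $\PPi^1_2$-Shoenfield absoluteness can be applied to transfer homogeneity to a perfect subtree.

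To verify the right-hand side inside $\model{L}[G]$, I would first note that $\asacks$ satisfies axiom A (as established above), hence is proper, so the countable-support iteration $\asacks_{\omega_1}$ is proper and preserves $\omega_1$. Since $\omega_1$ has uncountable cofinality, every real of $\model{L}[G]$ already belongs to some $\model{L}[G_\alpha]$ with $\alpha < \omega_1$. Given a real $x$, fix such an $\alpha$; the $\alpha$-th iterand adds an $\asacks$-generic $g_\alpha$ over $\model{L}[G_\alpha] \supseteq \model{L}[x]$, and since $\asacks$ is definable, $g_\alpha$ is \emph{a fortiori} $\asacks$-generic over $\model{L}[x]$. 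Thus the hypothesis of the characterization holds for every real, giving $\model{L}[G] \models \textbf{p}\DDelta^1_3(\textsc{Sacks})$.

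For the second conjunct I would use the Ihoda--Shelah characterization (see \cite{BJ95}): $\DDelta^1_2(\textsc{Baire})$ holds if and only if for every real $x$ there is a Cohen real over $\model{L}[x]$. It therefore suffices to produce a single real with no Cohen real over its $\model{L}$-extension, and I would take $x=0$ so that $\model{L}[x]=\model{L}$. Since $\asacks$ has the Laver property and the Laver property is preserved under countable-support iteration of proper forcings (\cite{BJ95}), $\asacks_{\omega_1}$ has the Laver property and so adds no Cohen real over $\model{L}$; hence there is no Cohen real over $\model{L}=\model{L}[0]$ in $\model{L}[G]$, and $\DDelta^1_2(\textsc{Baire})$ fails. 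I expect the main obstacle to be the appeal to the characterization theorem in the first step: one must check that our specially engineered $\asacks$---designed to secure the Laver property rather than to be the ``standard'' amoeba---genuinely meets the definability and amoeba hypotheses of that theorem and that the uniformity and absoluteness it requires survive. The amoeba lemma above supplies the key structural input, whereas by contrast the Laver-property half is routine.
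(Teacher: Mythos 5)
Your skeleton is the right one, and your treatment of the second conjunct coincides with the paper's: the Laver property of $\asacks$ is preserved by the countable-support iteration, so $\asacks_{\omega_1}$ adds no Cohen real over $\model{L}$, and the Judah--Shelah characterization from \cite{SJ89} then refutes $\DDelta^1_2(\textsc{Baire})$. The problem is in the first conjunct, and it is concentrated in one step: your claim that the stage-$\alpha$ generic $g_\alpha$, being $\asacks$-generic over $\model{L}[G_\alpha]$, is ``a fortiori'' $\asacks$-generic over $\model{L}[x]$ because $\asacks$ is definable. For non-ccc tree forcings such as $\sacks$ and $\asacks$, genericity does \emph{not} transfer down to inner models: a dense set of the inner model need not be predense in the larger model's poset. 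Concretely, let $c$ be Cohen over $\model{L}$; in $\model{L}[c]$ there is a perfect tree $T$ all of whose branches are Cohen over $\model{L}$ (and this persists to further extensions, since avoiding each closed nowhere dense set coded in $\model{L}$ is absolute). The set $D$ of perfect trees $S \in \model{L}$ with $[S]$ nowhere dense is dense in $\sacks \cap \model{L}$ and belongs to $\model{L}$, yet every $S \in D$ is incompatible with $T$, because $[S]$ contains no Cohen real over $\model{L}$. Hence a Sacks real over $\model{L}[c]$ chosen below $T$ meets no element of $D$ and is not Sacks-generic over $\model{L}$; the same phenomenon blocks your transfer from $\model{L}[G_\alpha]$ down to $\model{L}[x]$. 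Nor is there a cheap substitute: in $\model{L}[G]$ the reals of $\model{L}[x]$ form an uncountable set ($\omega_1$ is preserved and CH holds in $\model{L}[x]$), so one cannot manufacture a generic over $\model{L}[x]$ by a countability argument. Relatedly, the two-way ``Solovay-type characterization'' of $\textbf{p}\DDelta^1_3(\textsc{Sacks})$ over $\model{L}[x]$ that you invoke is not a quotable theorem of \cite{K12} or \cite{BL99}; those characterizations live at the second projective level.

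The repair is to drop $\model{L}[x]$ altogether and run the Solovay-style argument over the intermediate models themselves, which is exactly what the paper does. Given the $\SSigma^1_3$ definitions $\varphi_0,\varphi_1$ with parameter $z$, pick $\alpha < \omega_1$ with $z \in \model{L}[G_\alpha]$ (by properness every real appears at some countable stage). Since $\model{L}[G_\alpha]$ is a ZFC model and the set is \emph{provably} $\DDelta^1_3$, some $T \in \sacks \cap \model{L}[G_\alpha]$ forces, in Sacks forcing over $\model{L}[G_\alpha]$, either $\varphi_0$ or $\varphi_1$ of the Sacks real. The stage-$\alpha$ amoeba then supplies, in $\model{L}[G]$, a perfect $T' \subseteq T$ all of whose branches are Sacks-generic over $\model{L}[G_\alpha]$ --- and they remain generic in $\model{L}[G]$ by the ``any ZFC model $\model{M}$'' clause of Definition \ref{def:amoeba}. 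Finally, $\SSigma^1_3$-upward absoluteness (Shoenfield applied to the $\PPi^1_2$ matrix) transfers $\varphi_i(x)$ from $\model{L}[G_\alpha][x]$ to $\model{L}[G]$ for each branch $x \in [T']$. Note that the only properties of the auxiliary model used are that it satisfies ZFC, contains the parameters, and that the iteration provides amoeba generics over it; $\model{L}[G_\alpha]$ meets these requirements, while $\model{L}[x]$ does not.
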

\begin{proof}
Let $X \subseteq \cantor$ be defined via the $\Sigma^1_3$-formulae $\varphi_0$ and $\varphi_1$ with parameter $z \in \cantor$. Further let $\alpha < \omega_1$ such that $z \in \model{L}[G_\alpha]$, possible by properness. Let $\dot{x}$ be a name for a Sacks real over $\model{L}[G_\alpha]$. Since $X$ is provable $\Delta^1_3(z)$, it follows 
\[
\model{L}[G_\alpha] \models \text{``} \exists  T \in \sacks (T \force \varphi_0(\dot{x}) \vee T \force \varphi_1(\dot{x})) \text{"}.
\]
First assume $T \force \varphi_0(\dot{x})$, which means, for every Sacks real over $\model{L}[G_\alpha]$ through $T$, $\model{L}[G_\alpha][x] \models \varphi_0(x)$. 

Let us now argue within $\model{L}[G]$. Since $\asacks$ adds a perfect set of Sacks reals inside any perfect set from the ground model, we have a perfect tree $T' \subseteq T$ such that any $x \in [T']$ is Sacks over $\model{L}[G_\alpha]$. Hence, for every $x \in [T']$, we get $\model{L}[G_\alpha][x] \models \varphi_0(x)$, which gives $\varphi_0(x)$, by $\Sigma^1_3$-upward absoluteness. We have therefore shown that
\[
\model{L}[G] \models \exists T' \in \sacks ([T'] \subseteq  X).
\]  
Analogously, if $T \force \varphi_1(\dot{x})$ we obtain $\model{L}[G] \models \exists T' \in \sacks ([T'] \cap X= \emptyset)$.
This concludes the proof concerning $\textbf{p}\DDelta^1_3(\textsc{sacks})$. To show that $\DDelta^1_2(\textsc{Baire})$ fails it is sufficient to note that no Cohen reals are added by $\asacks_{\omega_1}$ because it satisfies the Laver property, and so $\model{L}[G] \models \neg \DDelta^1_2(\textsc{Baire})$, by well-known result proved in \cite{SJ89}.
\end{proof}
We remark that some very interesting results about $\DDelta^1_3$-measurability related to tree-forcings have been recently found by Fischer, Friedman and Khomskii in \cite{FFK14}.

\begin{remark}
Let $\asacks^*$ be the natural amoeba-Sacks adding Cohen reals. Consider the following map $\phi: \asacks^* \times \asacks^* \to \asacks^*$ such that $\langle (p_0,T_0),(p_1,T_1) \rangle$ is mapped to $(0^\conc p_0 \cup 1^\conc p_1, 0^\conc T_0 \cup 1^\conc T_1)$, where $i^\conc T:= \{ s: \exists t \in T (s=i^\conc t)  \}$, for every (possibly finite) tree in $\cantorfin$. It is straightforward to check that such $\phi$ is an isomorphism between $\asacks^* \times \asacks^*$ and $\asacks^*$ below the condition $(\{ \langle 0 \rangle, \langle 1 \rangle \}, \cantorfin)$. Hence, $\asacks^* \times \asacks^*$ completly embeds into $\asacks^*$, and so $\sacks \times \sacks$ complete embeds into $\asacks^*$ as well. In particular, we indirectly get that $\sacks \times \sacks$ is proper. 

Finally, note that such an argument holds for any finite product $(\asacks^*)^n$. In fact, fixed $n \in \omega$, let $t_0,t_1, \dots t_{n-1}$ be a list of  $n$-many sequences of $\cantorfin$ which are pairwise incompatible. Then let $\phi: (\asacks^*)^n \to \asacks^*$ be such that 
\[
\phi(\langle (p_j,T_j): j < n \rangle)= \big( \bigcup_{j<n} t_j^\conc p_j, \bigcup_{j<n} t_j^\conc T_j \big).
\]
As above, $\phi$ is an isomorphism between  $(\asacks^*)^n$ and $\asacks^*$ below the condition $(p^*,\cantorfin)$, with $p^*$ the finite tree generated by $t_0, \dots t_{n-1}$, i.e., the set of initial segments of sequences in $\bigcup_{j < n} t_j$.

Note that this argument is no longer true for our amoeba-Sacks $\asacks$ analyzed in this paper. In fact, it is easy to see that $\asacks$ adds a dominating real. Now consider the product $\asacks \times \asacks$ and let $d_0, d_1$ be a pair of mutually dominating reals added by $\asacks \times \asacks$. Define the real $c$ as follows: $c(n)=0$ iff $d_0(n) \leq d_1(n)$, $c(n)=1$ otherwise. Such $c$ is obviously Cohen, since we can freely make either $d_0(n) > d_1(n)$ or $d_0(n) < d_1(n)$, and hence $\asacks \times \asacks$ does not completely embed into $\asacks$. 
\end{remark}


\section{Concluding remarks}
Many difficulties come out when trying to remove the pathology of Remark \ref{remark:silver-cohen} about amoeba-Silver, as we did for amoeba-Sacks.
A first idea to remove Cohen reals could be to oblige the Silver tree $T$ of the pair $(p,T)$ to have always an even number of 0s between two subsequent splitting nodes. 
Nevertheless, even if this modification formally removes the Cohen real defined as in Remark \ref{remark:silver-cohen}, it cannot suppress \emph{any} Cohen real; in fact, putting $\Gamma_n = \{ j \in \omega: \levnumber_{T_G}(n+1) + 2 < j \leq \levnumber_{T_G}(n+2)+1 \land T_G(j)=0 \}$, one can similarly define a Cohen real by putting $c(n)=0$ iff $|\Gamma_n|=0$ modulo $3$ (and $c(n)=1$ otherwise). More generally, one could fix a new condition saying that the number of 0s between two splitting levels of $T$ has to be a multiple of a given sequence of natural numbers $n_0, \dots n_k$; in any case, this will not settle the problem, since one could define a \emph{new} Cohen real such that $c(n)=0$ iff $|\Gamma_n|=0$ modulo $n_0 \cdot n_1 \cdot \dots \cdot n_k+1$. Furthermore, if we look at the construction of the amoeba-Sacks, one can realize that it does not work for the amoeba-Silver; in fact, we cannot freely remove splitting nodes as in claim \ref{claim1}, since we have to respect the uniformity of the Silver tree.

As we said in the introduction, the notion of ``amoeba'' is meant as a ``forcing adding a \emph{large} set of \emph{generic} reals'', where the words ``large'' and ``generic'' depend on the forcing we are dealing with. In the examples we have mentioned and studied in the previous sections, ``large'' and ``generic'' were connected to the same forcing notion; in fact, we have considered an amoeba-Sacks adding a \emph{Sacks} tree of \emph{Sacks} branches and an amoeba-Silver adding a \emph{Silver} tree of \emph{Silver} branches. Furthermore, the usual amoeba for measure and category add a \emph{measure one }set of \emph{random} reals and a \emph{comeager} set of \emph{Cohen} reals, respectively. What can also be done is to consider amoeba for which the notion of ``large'' and the one of ``generic'' are not necessarily connected. As a simple example, one can consider the Cohen forcing, viewed as a forcing adding a perfect tree of Cohen branches. Or otherwise, one could pick the forcing $\poset{RT}$ consisting of pairs $(p,T)$, where $T \subseteq \cantor$ is a perfect tree with positive measure and $p \subset T$ is a finite subtree. It is clear that such a forcing adds a perfect tree of random reals. These two examples are particular cases of a more general definition.
\begin{definition}
Let $\poset{P}_0$ and $\poset{P}_1$ be tree-forcing notions. We say that a forcing $Q$ is a $(\poset{P}_0,\poset{P}_1)$-amoeba iff for every $p \in \poset{P}_1 \cap \model{N}$ there is $p' \in \poset{P}_0 \cap \model{N}^Q$ such that $p' \subseteq p$ and 
\[
\model{M} \models \text{`` every branch $x \in [p']$ is $\poset{P}_1$-generic over $\model{N}$ ''},
\]
where $\model{M}$ is \emph{any} model of ZFC containing as a subset the extension of $\model{N}$ via $Q$. 
\end{definition}
Hence, the forcing $\poset{RT}$ mentioned above is an $(\sacks,\random)$-amoeba, while Cohen forcing can be seen as an $(\sacks, \cohen)$-amoeba. Note that this general version of amoeba can be useful to obtain some results regarding regularity properties, such as that in lemma \ref{lemma:delta13}. In fact, a similar proof shows that an $\omega_1$-iteration of $\poset{RT}$ provides a model for $\textbf{p}\DDelta^1_3(\textsc{sacks})$ as well. However, the two iterations are different. In fact, $\poset{RT}$ adds Cohen reals but not dominating reals. The latter is proven in \cite[lemma 3.2.24, 6.5.10 and theorem 6.5.11]{BJ95}, whereas the former can be shown as follows: pick an interval partition $\{I_n: n \in \omega \}$ of $\omega$ such that, for all but finitely many $n$, any perfect tree of positive measure has at least one splitting node of length occurring in $I_n$. Then define the real $x \in \cantor$ such that $x(n)=$ the parity of splitting nodes of $T_G$ occurring in $I_n$, where $T_G$ is the $\poset{RT}$-generic tree given by $\bigcup\{ p: \exists T((p,T) \in G ) \}$. It is straightforward to check that $x$ is a Cohen real. Hence if we pick $\poset{RT}_{\omega_1}$ to be the $\omega_1$-iteration of $\poset{RT}$ with finite support (this to make sure that no dominating reals are added by the iteration), we obtain a model for $\textbf{p}\DDelta^1_3(\textsc{sacks}) \land \neg \DDelta^1_2(\textsc{Laver}) \land \DDelta^1_2(\textsc{Baire})$, where Laver measurability is defined analogously as Sacks measurability, and we use \cite[Theorem 4.1]{BL99} to obtain $\neg \DDelta^1_2(\textsc{Laver})$. Hence, such a model is different from the one presented in lemma \ref{lemma:delta13} satisfying $\textbf{p}\DDelta^1_3(\textsc{sacks}) \land \DDelta^1_2(\textsc{Laver}) \land \neg \DDelta^1_2(\textsc{Baire})$. 

These observations, together with Remark \ref{remark:silver-cohen}, give rise to the following interesting questions:
\begin{itemize}
\item[(Q1)] Can one define an amoeba-Sacks not adding either Cohen or dominating reals?
\item[(Q2)] Can one define an amoeba-Silver not adding Cohen reals? And/or not adding either Cohen or dominating reals?
\item[(Q3)] Does ``adding a perfect tree of random branches'' imply ``adding Cohen reals''?
\end{itemize}

\addcontentsline{toc}{section}{Bibliography}

\end{document}